\newcommand{\1}{\mathbf{1}}
\numberwithin{equation}{section}
\newcommand{\var}{\text{Var}}
\theoremstyle{plain}
\newtheorem{thm}{Theorem}[section]
\newtheorem{lemma}[thm]{Lemma}
\newtheorem{prop}[thm]{Proposition}
\theoremstyle{definition}
\newtheorem{remark}[thm]{Remark}
\theoremstyle{remark}
\newtheorem*{stat*}{Statement}
\newtheorem*{prop*}{Proposition}
\numberwithin{equation}{section}
\newcommand{\E}{\mathbb{E}}
\newcommand{\of}[1]{\left(#1\right)}
\renewcommand{\Im}{\mathfrak{Im}}
\renewcommand{\Re}{\mathfrak{Re}}
\newcommand{\N}{\mathbb{N}}
\newcommand{\Z}{\mathbb{Z}}
\newcommand{\R}{\mathbb{R}}
\newcommand{\C}{\mathbb{C}}
\newcommand{\T}{\mathbb{T}}
\newcommand{\Tr}{\operatorname{Tr}}
\def\chaptermark#1{}%whatever
\def\chapter{%
  \if@openright\cleardoublepage\else\clearpage\fi
  \thispagestyle{plain}\global\@topnum\z@
  \@afterindenttrue \secdef\@chapter\@schapter}
\def\@chapter[#1]#2{\refstepcounter{chapter}%
  \ifnum\c@secnumdepth<\z@ \let\@secnumber\@empty
  \else \let\@secnumber\thechapter \fi
  \typeout{\chaptername\space\@secnumber}%
  \def\@toclevel{0}%
  \ifx\chaptername\appendixname \@tocwriteb\tocappendix{chapter}{#2}%
  \else \@tocwriteb\tocchapter{chapter}{#2}\fi
  \chaptermark{#1}%
  \addtocontents{lof}{\protect\addvspace{10\p@}}%
  \addtocontents{lot}{\protect\addvspace{10\p@}}%
  \@makechapterhead{#2}\@afterheading}
\def\@schapter#1{\typeout{#1}%
  \let\@secnumber\@empty
  \def\@toclevel{0}%
  \ifx\chaptername\appendixname \@tocwriteb\tocappendix{chapter}{#1}%
  \else \@tocwriteb\tocchapter{chapter}{#1}\fi
  \chaptermark{#1}%
  \addtocontents{lof}{\protect\addvspace{10\p@}}%
  \addtocontents{lot}{\protect\addvspace{10\p@}}%
  \@makeschapterhead{#1}\@afterheading}
\newcommand\chaptername{Chapter}
\def\@makechapterhead#1{\global\topskip 7.5pc\relax
  \begingroup
  \fontsize{\@xivpt}{18}\bfseries\centering
    \ifnum\c@secnumdepth>\m@ne
      \leavevmode \hskip-\leftskip
      \rlap{\vbox to\z@{\vss
          \centerline{\normalsize\mdseries
              \uppercase\@xp{\chaptername}\enspace\thechapter}
          \vskip 3pc}}\hskip\leftskip\fi
     #1\par \endgroup
  \skip@34\p@ \advance\skip@-\normalbaselineskip
  \vskip\skip@ }
\def\@makeschapterhead#1{\global\topskip 7.5pc\relax
  \begingroup
  \fontsize{\@xivpt}{18}\bfseries\centering
  #1\par \endgroup
  \skip@34\p@ \advance\skip@-\normalbaselineskip
  \vskip\skip@ }
\def\appendix{\par
  \c@chapter\z@ \c@section\z@
  \let\chaptername\appendixname
  \def\thechapter{\@Alph\c@chapter}}
\newcounter{chapter}
\newif\if@openright
\renewcommand{\tocsection}[3]{%
  \indentlabel{\@ifnotempty{#2}{\bfseries\ignorespaces#1 #2\quad}}\bfseries#3} 
\renewcommand{\tocsubsection}[3]{%
  \indentlabel{\@ifnotempty{#2}{\ignorespaces#1 #2\quad}}#3}
\newcommand\@dotsep{4.5}
\def\@tocline#1#2#3#4#5#6#7{\relax
  \ifnum #1>\c@tocdepth % then omit
  \else
    \par \addpenalty\@secpenalty\addvspace{#2}%
    \begingroup \hyphenpenalty\@M
    \@ifempty{#4}{%
      \@tempdima\csname r@tocindent\number#1\endcsname\relax
    }{%
      \@tempdima#4\relax
    }%
    \parindent\z@ \leftskip#3\relax \advance\leftskip\@tempdima\relax
    \rightskip\@pnumwidth plus1em \parfillskip-\@pnumwidth
    #5\leavevmode\hskip-\@tempdima{#6}\nobreak
    \leaders\hbox{$\m@th\mkern \@dotsep mu\hbox{.}\mkern \@dotsep mu$}\hfill
    \nobreak
    \hbox to\@pnumwidth{\@tocpagenum{\ifnum#1=1\bfseries\fi#7}}\par% <-- \bfseries for \section page
    \nobreak
    \endgroup
  \fi}
\renewcommand\csname r@tocindent0\endcsname{0pt}
\def\l@subsection{\@tocline{2}{0pt}{2.5pc}{5pc}{}}
\begin{document}

\title[Pair Dependent Statistics for Arbitrary $\beta$]{Central Limit Theorem for $C\beta E$ Pair Dependent Statistics in Mesoscopic Regime}

\author[A. Aguirre and A. Soshnikov]{Ander Aguirre and Alexander Soshnikov}

\address{University of California at Davis \\ Department of Mathematics \\ 1 Shields Avenue \\Davis CA 95616 \\ United States of America} 

\email{aaguirre@ucdavis.edu}

\address{University of California at Davis \\ Department of Mathematics \\ 1 Shields Avenue \\  Davis CA 95616 \\ United States of America} 

\email{soshniko@math.ucdavis.edu}

 \subjclass[2020]{ 60B20, 60F05}

 \keywords{Random Matrices, Central Limit Theorem}
 
 \date{ 27 September 2021}

\begin{abstract} We extend our results on the fluctuation of the pair counting statistic of the Circular Beta Ensemble
$\sum_{i\neq j}f(L_N(\theta_i-\theta_j))$
for arbitrary $\beta>0$ in the mesoscopic regime $L_N=\mathcal{O}\of{N^{2/3-\epsilon}}$. In addition, we consider bipartite statistics in the local regime for $\beta=2.$
\end{abstract}

\maketitle

\tableofcontents

\section{Introduction}

     The Circular Beta Ensemble ($C\beta E$) is a random point process of $N\geq 2$ particles on the unit circle; where the joint probability density of the particles $ \theta_j\in[0,2\pi)$, $1\leq j\leq N,$ with respect to the Lebesgue measure is given by:
     
     \begin{align}\label{cbe}p_{\beta,N}(\theta_1,\cdots, \theta_N)=\frac{1}{Z_{\beta,N}}\prod_{j<k}|e^{i\theta_j}-e^{i\theta_k}|^{\beta}.
\end{align} 
Here $\beta>0$ and $Z_{\beta, N}$  is the normalization constant: 
\[Z_{\beta,N}=(2\pi)^{N}\frac{\Gamma(1+\frac{\beta N}{2})}{(\Gamma(1+\frac{\beta}{2}))^N}.\]
The $C\beta E$ generalizes the classical ensembles of random unitary matrices (COE/CUE/CSE) introduced by Dyson in the 1960s in the context of quantum physics (see e.g. \cite{Dyson1}-\cite{Dyson4}). The $C\beta E$ can be interpreted as a Coulomb gas, or system of $N$ repelling particles, with  $\beta$ taking the role of the inverse temperature. It can also be viewed as the limiting invariant distribution of a stochastic evolution process on the eigenvalues known as the \textit{Dyson Brownian motion} (see \cite{webb}).  An explicit sparse random matrix model with eigenvalue distribution matching the $C\beta E$ was introduced in \cite{KN}. To understand the fluctuation of the eigenvalues one can study \textit{linear statistics} of the form
$\sum_{i=1}^Nf(L_N\theta_i), \ 1\leq L_N\leq N.$
The Central Limit Theorem for linear statistics of eigenvalues of the $C\beta E$ was proved 
by Johansson in \cite{johansson1} and extended  in \cite{lambert} beyond the macroscopic regime to $L_N\to \infty$. Recently, in \cite{pairs} and \cite{var} we studied  \textit{pair dependent} statistics of the form:

\begin{align}
  \label{pairs}  S_N(f)=\sum_{i\neq j}f_{L_N}(\theta_i-\theta_j),
\end{align}
where $f_{L_N}(\theta)=f(L_N\*\theta)$ for
$\theta \in [-\pi, \pi)$ and is extended $2\*\pi$-periodically to the whole real line. In the global regime we take $\ L_N=1 \ $ and $f$ to be a sufficiently smooth function on the unit circle. In the mesoscopic regime ($L_N\to \infty, \ \frac{L_N}{N}\to 0$) and the local regime ($L_N=N$) we consider 
$f$ to be a smooth compactly supported function on the real line.

 The research in \cite{pairs}-\cite{var} was motivated by a classical result of Montgomery on pair correlation of zeros of the Riemann zeta function \cite{montgomery1}-\cite{montgomery2}.
Assuming the Riemann Hypothesis, Montgomery studied the distribution of the 
``non-trivial'' zeros on the critical line $\{ 1/2 \pm \gamma_n\}$. In particular, for sufficiently large $T$, fast decaying $f$ with $\operatorname{Supp}{\mathcal{F}(f)}  \subset [-\pi, \pi],$ and rescaling $\tilde{\gamma_n}=\frac{\gamma_n}{2\pi}\log(\gamma_n)$ he considered the statistic:

\[ \sum_{0<\tilde{\gamma}_j\neq \tilde{\gamma}_k <T} f(\tilde{\gamma}_j- \tilde{\gamma}_k ).\]

The results of \cite{montgomery1}-\cite{montgomery2} imply that the two-point correlations of the (rescaled) critical zeros coincide in the limit with the local two point correlations of the eigenvalues of a CUE ($\beta=2$) random matrix. \\

The asymptotic distribution of the pair counting statistic (\ref{pairs}) depends on the speed of the growth of $L_N,$ regularity (smoothness) properties of the test function $f,$ and the value of the inverse temperature
$\beta>0.$ The results of \cite{pairs} deal with the limiting behavior of (\ref{pairs}) in three different regimes, namely macroscopic ($L_N=1$), mesoscopic ($1\ll L_N \ll N$) and microscopic ($L_N=N$).
In the macroscopic (unscaled) $L_N=1$ case it was shown  that $S_N(g)$
has a non-Gaussian fluctuation in the limit $N\to \infty$ provided $g$ is a sufficiently smooth function on the unit circle. In particular (see Theorem 2.1 in \cite{pairs}),

\[S_N(g)-\E S_N(g)\xrightarrow{\hspace{2mm}\mathcal{D}\hspace{2mm}  } \frac{4}{\beta}\sum_{m=1}^{\infty}\hat{g}(m)m(\varphi_m-1)\]
        
where $\varphi_m$ are i.i.d. exponential random variables with  $\E(\varphi_m)=1$,
and 
\begin{align*}
\hat{g}(m)=\frac{1}{2\*\pi}\*\int_0^{2\*\pi}g(x)\* e^{-i\*m\*x}\* dx, \ \ \ m\in \Z,
\end{align*}
are the Fourier coefficients of $g.$
The result was proved under the optimal condition
 $f'\in L^2(\mathbb{T})$ for $\beta=2$, and under slightly sub-optimal conditions for $\beta\neq 2.$\\

In the case of  a slowly growing variance (i.e. when $\sum_{m=-N}^N |\hat{g}(m)|^2\*m^2$ is a slowly growing sequence) the asymptotic fluctuation becomes Gaussian (see \cite{var}). The determinantal structure of the correlation functions of the CUE ($\beta=2$) enabled us to study  the pair counting statistic up to the microscopic regime. In particular, a pair counting statistic was shown to have limiting Gaussian fluctuation provided $f$ is sufficiently smooth. However, for arbitrary $\beta\neq2$ the growth of $L_N$ in \cite{pairs} was restricted to $L_N=o(N^{\varepsilon})$. In this note, we extend the results of \cite{pairs} for $S_N(f)$ and arbitrary $\beta>0$ to $L_N=\mathcal{O}(N^{2/3-\epsilon})$ in the mesoscopic regime. Next, we formulate the main result.\\

\hypertarget{main}{}

\begin{thm} Let $L_N\to \infty$ so that $L_N=\mathcal{O}(N^{2/3 -\epsilon})$,  where $\epsilon>0$ is arbitrary small and  $f\in C_c^{\infty}$ be even, smooth and compactly supported. Consider a pair counting statistic $ S_N(f)$ defined in (\ref{pairs}). Then as $N\rightarrow \infty$

\begin{align}
\label{main}\frac{S_N(f)-\E S_N(f)}{\sqrt{L_N}}\overset{\mathcal{D}}{\longrightarrow} \mathcal{N}\left(0,\frac{4}{\pi \beta^2}\int_{\R}|\hat{f}(t)|^2\*t^2\*dt\right).
\end{align}

\end{thm}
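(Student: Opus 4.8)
The plan is to move from the pair statistic to the spectral side and then run a cumulant expansion. First I would expand the periodic test function into its Fourier series $f_{L_N}(\theta)=\sum_{m\in\Z}c_m e^{im\theta}$; a change of variables $u=L_N\theta$ together with the compact support of $f$ shows that $c_m\asymp\frac{1}{L_N}\hat f(m/L_N)$, the precise constant depending only on the normalization of the transform $\hat f$. Introducing the power sums $p_m=\sum_{j=1}^N e^{im\theta_j}$ and noting $\sum_{i\neq j}e^{im(\theta_i-\theta_j)}=|p_m|^2-N$ turns (\ref{pairs}) into the exact identity
\begin{equation}
S_N(f)=\sum_{m\in\Z}c_m\bigl(|p_m|^2-N\bigr),
\end{equation}
so that, since the $m=0$ term is deterministic and $p_{-m}=\overline{p_m}$,
\begin{equation}
X_N:=S_N(f)-\E S_N(f)=2\sum_{m\ges1}c_m\bigl(|p_m|^2-\E|p_m|^2\bigr).
\end{equation}
Because $f\in C_c^\infty$ the transform $\hat f$ decays faster than any polynomial, so the sum truncates at $m=\mathcal O(L_N\log N)$ with negligible error; the effective range of modes is $1\les m=\mathcal O(L_N)\ll N$.

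With $X_N$ written as a linear combination of the variables $|p_m|^2-\E|p_m|^2$, the natural device is the method of cumulants: it suffices to prove that $\var(X_N)/L_N$ converges to the constant in (\ref{main}) and that $L_N^{-k/2}\operatorname{cum}_k(X_N)\to0$ for every $k\ges3$. The input is the asymptotics of the joint moments $\E\bigl[\prod_i p_{m_i}\prod_j\overline{p_{n_j}}\bigr]$ of the $C\beta E$. By rotation invariance these vanish unless $\sum_i m_i=\sum_j n_j$, and the moment formulas for the $C\beta E$ power sums used in \cite{pairs} (with access to the ensemble provided by the matrix model of \cite{KN}) give $\E|p_m|^2=\tfrac{2m}{\beta}\bigl(1+o(1)\bigr)$ for $1\les m\ll N$, together with the fact that the rescaled power sums $\sqrt{\beta/(2m)}\,p_m$ converge to independent standard complex Gaussians with explicit finite-$N$ corrections.

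For the variance I would isolate the diagonal. Using $\var(|p_m|^2)\approx 4m^2/\beta^2$ in the Gaussian approximation,
\begin{equation}
\var(X_N)\approx4\sum_{m\ges1}c_m^2\,\var\bigl(|p_m|^2\bigr)\approx\frac{16}{\beta^2}\sum_{m\ges1}c_m^2\,m^2,
\end{equation}
and recognizing the last expression as a Riemann sum of mesh $1/L_N$ yields $\var(X_N)\sim\frac{4L_N}{\pi\beta^2}\int_\R|\hat f(t)|^2t^2\,dt$, the factor $\pi^{-1}$ reflecting the normalization of $\hat f$ fixed in (\ref{main}). The off-diagonal covariances $\cov(|p_m|^2,|p_{m'}|^2)$ with $m\neq m'$ vanish in the Gaussian model and enter only through the finite-$N$ corrections, which are lower order once $L_N\ll N$.

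The hard part will be the higher cumulants. For $k\ges3$ one expands $\operatorname{cum}_k(X_N)$ over the modes and splits it into two pieces. The contribution of the leading independent-Gaussian part of $\{p_m\}$ is a sum of $\mathcal O(L_N)$ essentially independent terms, hence of order $L_N$, and vanishes like $L_N^{1-k/2}$ after the normalization; this is the usual central-limit mechanism and poses no restriction. The genuine obstacle is the accumulation of the corrections to joint Gaussianity and independence of the power sums: the resonant moments $\E\bigl[p_{m_1}p_{m_2}\overline{p_{m_1+m_2}}\bigr]$ and their higher analogues are nonzero for the $C\beta E$ and scale as a negative power of $N$ times positive powers of the indices. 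Tracking these through the cumulant sums and summing over $m=\mathcal O(L_N)$, the dominant error is of order $L_N^3/N^2$, and demanding that it tend to zero is precisely the hypothesis $L_N=\mathcal O(N^{2/3-\epsilon})$. Establishing these estimates uniformly—without the determinantal structure available only at $\beta=2$—via the explicit Jack-polynomial moment expansions is the crux of the argument; once all cumulants of order $\ges3$ are shown to vanish, the moment criterion for convergence to a Gaussian gives (\ref{main}).
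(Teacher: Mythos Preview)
Your strategy---Fourier expand, reduce to the variables $|p_m|^2$, and run the method of cumulants---is essentially the approach of \cite{pairs}, and as the introduction here notes, that route only reached $L_N=o(N^{\varepsilon})$ for general $\beta$. The obstruction is exactly the step you flag as ``the crux'': controlling the joint cumulants of $\{|p_m|^2\}_{m\les d}$ for $\beta\neq 2$ to the precision needed to push $L_N$ up to $N^{2/3-\epsilon}$. You assert that the dominant correction scales like $L_N^3/N^2$ but give no mechanism for proving it; the Jack-polynomial expansions of Jiang--Matsumoto \cite{jm} yield clean bounds on $\E|p_m|^{2k}$ (Proposition~2.3 here) but not, in any direct way, the sharp mixed-cumulant estimates your argument would require for every $k\ges 3$. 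Without those, what you have is an outline, not a proof, and the history of the problem suggests the missing step is genuinely hard.

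The paper takes a different route that bypasses cumulants entirely. It truncates at $d=\lfloor L_N N^{\epsilon/2}\rfloor$, replaces the vector $T_d=(p_1,\dots,p_d)$ by a Gaussian vector $G_d$ with matching covariance, and transfers the CLT from the Gaussian model---where it is just Lindeberg--Feller for a weighted sum of independent exponentials---back to $S_{N,d}(f)$ via a Wasserstein-$1$ comparison. The key external input is the Johansson--Lambert estimate $\mathcal W_2(T_d,G_d)=\mathcal O(d^2/N)$, valid for all $\beta>0$ \cite{jl}. Writing the truncated statistic as a quadratic form $(A T_d,T_d)$ with $\|A\|=\mathcal O(L_N^{-1})$ and applying Cauchy--Schwarz gives
\[
\mathcal W_1\bigl(L_N^{-1/2}S_{N,d},\,L_N^{-1/2}S_d\bigr)=\mathcal O\bigl(d^{3}/(N L_N^{3/2})\bigr),
\]
and this is where the threshold $L_N\ll N^{2/3}$ actually enters. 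The tail $m>d$ is disposed of using only the second-moment bound of Proposition~2.3. So the $N^{2/3-\epsilon}$ range is inherited from the coupling result of \cite{jl}, not from any direct cumulant computation; if you want to salvage your approach, the natural move is to import that same Wasserstein bound rather than attempt the Jack-polynomial estimates from scratch.
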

Here $
\hat{f}(t)=\frac{1}{\sqrt{2\*\pi}}\*\int_{\R} f(x)\*e^{-i\*t\*x} \* dx $
denotes the Fourier transform of $f,$
and the notation $\overset{\mathcal{D}}{\longrightarrow}$ denotes convergence in distribution.  We also consider bipartite statistics
\begin{align}
  \label{bipairs}  
  B_N(f)=\sum_{i,j}f_{L_N}(\tau_i-\theta_j),
\end{align}
where $\{\tau_i\}_{i=1}^N$ and $\{\theta_j\}_{j=1}^N$ come from different ensembles on the unit circle, for example two independent $C\beta E$ ensembles. The following result holds.

\begin{thm} Let $\{\tau_i\}_{i=1}^N$ and $\{\theta_j\}_{j=1}^N$ be point configurations from two independent $C\beta E$ ensembles,
$L_N=\mathcal{O}(N^{2/3 -\epsilon})$,  with $\epsilon>0$ is arbitrary small and  $f\in C_c^{\infty}$ be even, smooth and compactly supported. Then $\E B_N(f)=\frac{N^2}{2\*\pi\*L_N}\*\int_{\R} f(x)\*dx$ and

\begin{align}
\label{main}\frac{B_N(f)-\E B_N(f)}{\sqrt{L_N}}\overset{\mathcal{D}}{\longrightarrow} \mathcal{N}\left(0,\frac{2}{\pi \beta^2}\int_{\R}|\hat{f}(t)|^2\*t^2\*dt\right).
\end{align}

\end{thm}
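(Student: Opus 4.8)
The plan is to follow the strategy of the preceding theorem (the single-ensemble statement for $S_N$), but to exploit the independence of the two ensembles, which both trivializes the variance computation and isolates the required central limit behaviour to a single $C\beta E$. First I would expand $f_{L_N}$ in a Fourier series on the circle, writing $a_m:=\widehat{f_{L_N}}(m)=\frac{1}{\sqrt{2\pi}\,L_N}\hat f(m/L_N)(1+o(1))$, and introduce the power sums $p^{\tau}_m=\sum_i e^{im\tau_i}$ and $p^{\theta}_m=\sum_j e^{im\theta_j}$, so that
\begin{align*}
B_N(f)=\sum_{m\in\Z} a_m\, p^{\tau}_m\,\overline{p^{\theta}_m}.
\end{align*}
The $m=0$ term equals $a_0N^2=\frac{N^2}{2\pi L_N}\int_\R f$, and since rotational invariance gives $\E p^{\tau}_m=\E p^{\theta}_m=0$ for $m\neq0$, independence forces $\E[p^{\tau}_m\overline{p^{\theta}_m}]=0$ there. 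Hence this term is exactly $\E B_N(f)$ and $B_N(f)-\E B_N(f)=\sum_{m\neq0}a_m\,p^{\tau}_m\,\overline{p^{\theta}_m}$.

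For the variance I would use only the two-point identity $\E[p^{\tau}_m p^{\tau}_{m'}]=\delta_{m+m',0}\,c_m$ with $c_m:=\E|p_m|^2$ (cross terms vanishing by rotational invariance) together with independence of the two families; a direct computation then gives the \emph{exact} identity
\begin{align*}
\var\of{B_N(f)}=2\sum_{m\geq1}a_m^2\,c_m^2 .
\end{align*}
Substituting the known asymptotics $c_m=\frac{2}{\beta}m(1+o(1))$, valid for $1\le m\le L_N\ll N$, and recognizing a Riemann sum with spacing $1/L_N$ and $t=m/L_N$ yields $\var(B_N(f))=\frac{2L_N}{\pi\beta^2}\int_\R|\hat f(t)|^2 t^2\,dt\,(1+o(1))$, i.e. the stated limiting variance after division by $L_N$. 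I would stress that the factor $2$ here — versus the $4$ of the $S_N$ theorem — is precisely the imprint of independence: the terms $\E[(p_m)^2]$ vanish and there is no fourth-moment contribution $\var(|p_m|^2)$ to double the answer.

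For the limit law the cleanest route conditions on $\Theta=\{\theta_j\}$. Setting $h_\Theta(\tau):=\sum_j f_{L_N}(\tau-\theta_j)=\sum_m a_m\overline{p^{\theta}_m}e^{im\tau}$, the bipartite statistic is simply the linear statistic $B_N(f)=\sum_i h_\Theta(\tau_i)$ of the independent $\tau$-ensemble. I would then (i) apply the mesoscopic CLT for $C\beta E$ linear statistics to the rescaled test function $h_\Theta/\sqrt{L_N}$ to obtain, conditionally on $\Theta$, asymptotic normality with conditional variance $V_\Theta/L_N$, where $V_\Theta=2\sum_{m\geq1}a_m^2 c_m|p^{\theta}_m|^2$; and (ii) show $V_\Theta/L_N\to\sigma^2$ in probability by a second-moment estimate, since $\cov(|p^{\theta}_m|^2,|p^{\theta}_{m'}|^2)\approx\delta_{mm'}c_m^2$ gives $\var(V_\Theta/L_N)=O\of{L_N^{-2}\sum_m a_m^4 c_m^4}=O(1/L_N)\to0$. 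Integrating the conditional characteristic function, $\E e^{is(B_N-\E B_N)/\sqrt{L_N}}=\E\,\E[\,\cdot\mid\Theta]\to e^{-s^2\sigma^2/2}$. An equivalent route replaces the conditioning by the Diaconis--Shahshahani/Jiang--Matsumoto Gaussian approximation $p_m\approx\sqrt{c_m}\,g_m$ with $g_m$ i.i.d. standard complex Gaussians; then $B_N-\E B_N\approx\sum_{m\geq1}2\Re\of{a_m c_m\,g^{\tau}_m\overline{g^{\theta}_m}}$ is a sum of independent, small, mean-zero terms, all odd cumulants vanish by the $U(1)$ symmetry of the complex Gaussians, and the Lindeberg--Lyapunov CLT applies with limiting variance $\sigma^2$.

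The genuine work, exactly as for the $S_N$ theorem, lies in quantitative control of the \emph{non-Gaussian corrections} to the joint law of the power sums $\{p_m\}_{1\le m\le L_N}$ of a single $C\beta E$: either to validate the conditional mesoscopic CLT uniformly over typical $\Theta$, or, in the moment formulation, to show that every cumulant of $(B_N-\E B_N)/\sqrt{L_N}$ of order $\geq3$ tends to $0$. For $\beta\neq2$ these corrections are governed by the ratio of the active frequency to $N$ and accumulate across the $\sim L_N$ contributing modes; the hypothesis $L_N=\mathcal O(N^{2/3-\epsilon})$ is precisely the range in which they remain negligible, which is what upgrades the earlier restriction $L_N=o(N^{\epsilon})$ of \cite{pairs}. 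I therefore expect the bulk of the argument to re-use, essentially verbatim, the cumulant estimates for power sums developed for the $S_N$ statement, the bipartite structure only simplifying the combinatorics through the independence of the two configurations.
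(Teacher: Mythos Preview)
Your proposal is correct but takes a genuinely different route from the paper. The paper's proof of Theorem 1.2 is a two--line remark: mimic the proof of Theorem 1.1 verbatim, replacing the quadratic form $(A T_d, T_d)$ by the bilinear form $(A T_d, \mathcal{T}_d)$ with $\mathcal{T}_d$ an independent copy. Concretely, one truncates to $d=\lfloor L_N N^{\epsilon/2}\rfloor$ modes, uses the Johansson--Lambert Wasserstein--2 bound $\mathcal{W}_2(T_d,G_d)=\mathcal{O}(d^2/N)$ on \emph{each} factor to couple $(A T_d,\mathcal{T}_d)$ to $(A G_d,\mathcal{G}_d)$, and then applies Lindeberg--Feller to the resulting sum of independent products $a_k\cdot\frac{2k}{\beta}\, Z_k\overline{W_k}$ of complex Gaussians. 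The bilinear case is in fact easier than the quadratic one in Lemma 3.3: $|(A T_d,\mathcal{T}_d)-(A G_d,\mathcal{G}_d)|\le \|A\|(\|T_d-G_d\|\|\mathcal{T}_d\|+\|G_d\|\|\mathcal{T}_d-\mathcal{G}_d\|)$, and Cauchy--Schwarz with (\ref{johlam}) gives the same $\mathcal{O}(d^3/(N L_N^{3/2}))$ bound.

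By contrast you propose (i) conditioning on $\Theta$ and invoking a mesoscopic CLT for the random linear statistic $h_\Theta$, or (ii) a moment/cumulant argument. Both can be made to work, but note that your final paragraph misidentifies the engine of the $S_N$ proof: the paper does \emph{not} use cumulant estimates for Theorem 1.1; the sole quantitative input is the Johansson--Lambert $\mathcal{W}_2$ bound (Proposition 2.1), and the restriction $L_N=\mathcal{O}(N^{2/3-\epsilon})$ comes from requiring $d^3/(N L_N^{3/2})\to 0$ in Lemma 3.3. Your conditioning route is more probabilistically transparent and nicely explains the variance factor $2$ versus $4$, but it requires a uniform-in-$\Theta$ version of the linear CLT (or a quantitative one), which is extra work; the paper's bilinear-Wasserstein route is entirely mechanical once Proposition 2.1 is in hand.
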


\begin{remark}
The global regime $L_N=1$ for a sufficiently smooth test function $g$ on the unit circle can be treated similarly to Theorem 2.1 in \cite{pairs}.   In particular,  the mean of a bipartite statistic is given by
$\E \sum_{i,j=1}^N g(\tau_i-\theta_j)=\hat{g}(0) \*N^2,$
and
\[B_N(g)-\E B_N(g)\xrightarrow{\hspace{2mm}\mathcal{D}\hspace{2mm}  } \frac{2}{\beta}\sum_{m=1}^{\infty}\hat{g}(m)\*m\*\phi_m,\]
where $\phi_m$ are i.i.d. centered double exponential (Laplace) random variables with $\var(\phi_m)=2.$
\end{remark}

We will denote a $N\times N$ random $C\beta E$ matrix by $U_N$ . The notation $a_N=o(b_N)$ means that $a_N/b_N \to 0$ as $N \to \infty$.
The notation $a_N=\mathcal{O}(b_N)$ means that the ratio of $a_N$ and $b_N$ is bounded in $N.$ 
In section 2, we recall some preliminary material.
Theorems 1.1 and 1.2 will be proved in Section 3. Local bipartite statistics are studied in Section 4. \\

Research has been partially supported  by the Simons Foundation Collaboration Grant for Mathematicians \#312391.

\section{Preliminary Material}
Consider an even real-valued function $g$ on the unit circle that can be represented by the Fourier series
$ g(x)=\sum_{m=-\infty}^{\infty} \hat{g}(m) \* e^{i\*m\*x}.$
Fourier expanding the pair dependent statistic we obtain:
        \begin{align*}
            \sum_{1\leq i\neq j\leq N}g(\theta_i-\theta_j)=2\sum_{m=1}^{\infty}\hat{g}(m)\left|\sum_{j=1}^N\exp\of{im\theta_j}\right|^2+ 
\hat{g}(0)\*N^2-N\*g(0).
        \end{align*}

Let now $f\in C_c^{\infty}(\R)$ be even, smooth and compactly supported, and $L_N\to \infty$ as $N\to \infty.$ Then for sufficiently large $L_N$ we can view $f(L_N\*x)$ as a smooth compactly supported function on the unit circle and the pair counting statistic (\ref{pairs}) can be written as:

\begin{align}
\label{pairs10}
      S_N(f) = \sum_{1\leq j\neq k\leq N} f_{L_N}(\theta_j-\theta_k) = 
 \frac{\hat{f}(0)\*N^2}{\sqrt{2\pi}L_N} +\sum_{k=1}^{\infty} \frac{2}{\sqrt{2\pi}L_N}\hat{f}\of{\frac{k}{L_N}}\left(|\Tr U_N^k|^2-N\right),
\end{align}
where   $ \hat{f} $     denotes the Fourier transform   of $f$. We will use the following notation for traces of powers of a random unitary matrix:
\begin{align}
\label{not} T_N^{(k)}:=\sum_{j=1}^Ne^{ik\theta_j}=\Tr U_N^k, \ \ k=0, \pm 1, \pm2, \ldots. 
\end{align}

Our proof relies on the results of Johansson and Lambert \cite{jl},  who estimated the Wasserstein-2  distance between a random vector of traces of powers of a $C\beta E $ matrix $U_N$ and a random vector of independent Gaussians of matching variance.  We refer to Theorem 1.5 in \cite{jl}. In the Appendix, we justify the claim in Remark 1.1 of \cite{jl} that enables us to extend results to arbitrary $\beta>0$ using \hyperlink{jm}{Proposition 2.3} below. 

Let $T_{d}=\left(T_N^{(k)}\right)_{k=1}^d$ be the vector  of the traces of the first $d$ powers of a random $C\beta E$ matrix $U_N$ and $G_{d}=\left(\sqrt{\frac{2}{\beta}k}Z_k\right)_{k=1}^d, $
where $Z_k$ are i.i.d. complex $\mathcal{N}(0,1)$.  Reformulated in terms of the pair $(T_{d}, G_{d})$, their result states:\\

\hypertarget{jl}{}
\begin{prop}[Johansson and Lambert \cite{jl}] 
Let $2\*d\leq N.$ and $\{e^{i\theta_j}\}_{j=1}^N$ be drawn from the $C\beta E$ with $\beta>0$. Then as $N\rightarrow \infty$ we have the following bound:
\begin{align}
    \label{johlam}\mathcal{W}_2\left(T_{d}, G_{d}\right)=\mathcal{O}\of{\frac{d^2}{N}}.
\end{align}

\end{prop}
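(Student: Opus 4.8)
The plan is to prove the bound by realizing the $C\beta E$ as the unique invariant measure of the circular $\beta$-Dyson Brownian motion and using its generator as a Stein operator for the target $G_d$ (equivalently, by coupling the stationary trace process to an Ornstein--Uhlenbeck process whose law is $G_d$). Recall that the $C\beta E$ is stationary for the system
\[
d\theta_j = \sqrt{\tfrac{2}{\beta N}}\, dB_j + \frac{1}{2N}\sum_{k \neq j}\cot\!\Bigl(\tfrac{\theta_j - \theta_k}{2}\Bigr)\,dt,
\]
where the $B_j$ are independent standard Brownian motions and the $1/N$ time scaling is chosen so that the traces evolve on an $O(1)$ scale. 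Writing $p_k := T_N^{(k)} = \Tr U_N^k$, the strategy is to extract from this dynamics a closed (infinite) system of SDEs for the $p_k$, identify its linear part with an Ornstein--Uhlenbeck process whose stationary law is exactly $G_d$, and show that the nonlinear remainder has relative size $1/N$.

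First I would apply Itô's formula to $p_k$. The martingale part is $ik\sqrt{2/(\beta N)}\sum_j e^{ik\theta_j}\,dB_j$, whose bracket has leading diagonal value $d\langle p_k, \overline{p_k}\rangle = \frac{2k^2}{\beta}\,dt$ and off-diagonal terms proportional to $p_{k\pm k'}$, which are lower-order fluctuations. The Itô second-order term gives $-\frac{k^2}{\beta N}p_k$, while expanding $\cot(\tfrac{x}{2}) = -i\sum_{m\geq 1}(e^{imx}-e^{-imx})$ and resumming contracts the interaction to
\[
\sum_{j} e^{ik\theta_j}\sum_{l \neq j}\cot\!\Bigl(\tfrac{\theta_j - \theta_l}{2}\Bigr)
= -i\sum_{m \geq 1}\bigl(p_{k+m}\,p_{-m} - p_{k-m}\,p_{m}\bigr),
\]
whose $m=k$ term produces $p_0 p_k = N p_k$, i.e.\ a second linear restoring term. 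After the time scaling the net effect is that each $p_k$ obeys an Ornstein--Uhlenbeck SDE with restoring rate $\lambda_k \asymp k$ (in fact $\approx k/2$) and stationary variance $\tfrac{2k}{\beta}$, matching the $k$-th coordinate of $G_d$, plus a nonlinear forcing $F_k = \frac{k}{2N}\sum_{m\neq k}(p_{k+m}p_{-m} - p_{k-m}p_m)$ of relative order $1/N$.

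Next I would run, on the same probability space, the linear Gaussian system $g_k$ obtained by deleting $F_k$ and freezing the martingale bracket at its leading constant value, driving it by the martingale part of $p_k$ via the martingale representation so that $g_k$ has stationary law $G_d$. Setting $\delta_k := p_k - g_k$, the difference solves a linear SDE with the same restoring rates, forced only by the nonlinear terms and the bracket mismatch, both $O(1/N)$. Using stationarity (so $\tfrac{d}{dt}\E\sum_{k\le d}|\delta_k|^2 = 0$) together with the dissipativity of the restoring drift reduces the bound to the time-integrated covariance of $F_k$ against the Ornstein--Uhlenbeck relaxation kernel. Because mode $k$ relaxes at rate $\asymp k$ and the high modes feeding the quadratic forcing relax on a comparably fast scale, $F_k$ decorrelates quickly and its filtered contribution is far smaller than a static estimate suggests; combined with the a priori bounds $\E|p_m|^2 = O(\min(m,N))$, the weighted summation over $1\le k\le d$ yields $\E\sum_{k\le d}|\delta_k|^2 = O(d^4/N^2)$. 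Since $\mathcal{W}_2^2(T_d,G_d)\le \E\sum_{k\le d}|\delta_k|^2$ for any coupling, this gives $\mathcal{W}_2(T_d,G_d)=O(d^2/N)$.

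The main obstacle I anticipate is twofold. First, the quadratic forcing is an infinite sum, and a naive second-moment bound on $F_k$ (treating distinct modes as independent and large modes as saturated) actually grows with $N$; the true content is the temporal decorrelation and the algebraic structure of $p_{k+m}p_{-m}-p_{k-m}p_m$, and extracting the sharp $d^2/N$ rather than a worse power hinges on pairing each restoring rate $\asymp k$ against the fast relaxation of the modes feeding its forcing. Second, this requires uniform a priori variance (and ideally concentration) estimates on the full family $\{p_m\}_{m\ge1}$ for arbitrary $\beta>0$, where no determinantal structure is available, so rigidity-type bounds for the $C\beta E$ are the essential analytic input. A cleaner alternative that avoids the explicit coupling is a generator-based multivariate Stein's method: use the circular-DBM generator $\mathcal{L}$, which annihilates expectations at stationarity ($\E[\mathcal{L}F]=0$), against the Ornstein--Uhlenbeck Stein equation for $G_d$, so that the distance is controlled by $\sup_\phi|\E[(\mathcal{A}_{OU}-\mathcal{L})\phi(T_d)]|$, the integrand being precisely the $O(d^2/N)$ nonlinear correction, whose moments can equivalently be generated from the $C\beta E$ loop (Schwinger--Dyson) equations.
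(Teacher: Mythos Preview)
Your ``cleaner alternative'' at the end---Stein's method via the Dyson Brownian motion generator against the Ornstein--Uhlenbeck Stein equation for $G_d$---is exactly what the paper (following Johansson--Lambert) does. The Stein inequality they use is
\[
\mathcal{W}_2(X,G)\le \sqrt{\E|\mathbf{K}^{-1}\xi|^2}+\sqrt{\E\|\mathbf{I}-\mathbf{K}^{-1}\boldsymbol{\Gamma}\|_{HS}^2},
\]
where $\boldsymbol{\Gamma}$ is the carr\'e du champ matrix of the map $\theta\mapsto X$ and $\xi$ is the nonlinear part of the drift. Both terms are then bounded by $O(d^3/N^2)$ using only the second and fourth moment estimates of Jiang--Matsumoto (Proposition~2.3), which is the sole place where $\beta$-dependence enters.

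Your primary route (explicit coupling of $p_k$ to a linear OU system driven by the same martingale) contains a real gap, and it is precisely the ``main obstacle'' you flag. The nonlinear forcing is \emph{not} an infinite sum: your series telescopes. Writing $z_j=e^{i\theta_j}$ and using $\cot(x/2)=i\,\frac{z_j+z_l}{z_j-z_l}$, symmetrization plus the finite geometric identity $\frac{z_j^k-z_l^k}{z_j-z_l}=\sum_{a=0}^{k-1}z_j^a z_l^{k-1-a}$ give, for $k\ge 1$,
\[
\sum_{j}e^{ik\theta_j}\sum_{l\neq j}\cot\!\Bigl(\tfrac{\theta_j-\theta_l}{2}\Bigr)
= i\Bigl[(N-k)\,p_k+\sum_{\ell=1}^{k-1}p_\ell\,p_{k-\ell}\Bigr].
\]
Equivalently, in your infinite expansion the tails $\sum_{m\ge 1}p_{k+m}\overline{p_m}$ and $\sum_{m>k}\overline{p_{m-k}}p_m$ cancel exactly. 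This collapse is the key algebraic input: the nonlinear drift of mode $k$ involves only the finitely many lower modes $p_1,\dots,p_{k-1}$, and correspondingly the $\zeta_k$ in the paper is the \emph{finite} sum $\sqrt{k/2}\sum_{\ell=1}^{k-1}\sqrt{\ell(k-\ell)}\,T_N^{(\ell)}T_N^{(k-\ell)}$. Once you have this, there is no need for any temporal decorrelation argument or rigidity input beyond Jiang--Matsumoto: a direct second-moment computation on the (finite) remainder and on the off-diagonal carr\'e du champ entries $\boldsymbol{\Gamma}_{kl}\propto p_{k\pm l}$ gives $O(d^3/N^2)$ immediately, and the announced $\mathcal{W}_2(T_d,G_d)=O(d^2/N)$ follows after undoing the diagonal rescaling $\sqrt{2k/\beta}$. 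Your coupling scheme can be salvaged with the finite-sum formula, but it then essentially reproduces the Stein estimate with extra bookkeeping.
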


We recall that the Wasserstein-$p$ distance between two probability measures on a normed space is defined as (see e.g. \cite{vil}):

\begin{align*}
 \mathcal{W}_p(\mu, \nu):= \left(\inf\{ \E ||X- Y||^p: \ (X,Y) \text{ is  a  r.v.  such  that} \ X\sim \mu, \ Y\sim \nu\}\right)^{1/p},
\end{align*}

where $p\geq 1$ and the notation $X\sim \mu$ means that a random variable $X$ has probability distribution $\mu.$ The Wasserstein distance takes values in $[0, \infty].$ For $p=1,$ the Wasserstein-1 distance $\mathcal{W}_1(\mu, \nu)$ is also known as the Kantorovich-Monge-Rubinstein metric and can be equivalently written as :

  \begin{align}
\label{wass1} \mathcal{W}_1(\mu, \nu):= \sup\{|\int f\*d\mu- \int f \* d\nu|: \ f \ \text{is} \ 1\text{-Lipschitz}\}.
\end{align}  
In other words, the supremum is taken over all real-valued functions $f$ that satisfy $|f(x)-f(y)|\leq d(x,y),$ where $d$ is the metric on the underlying metric space. \\
\begin{remark}
Important earlier results of D\"{o}bler and Stolz \cite{DoSt} and Webb \cite{webb} bounded from above the Wasserstein-1  distance 
$\mathcal{W}_1(T_{d}, G_{d}).$  In particular, it was shown in \cite{DoSt} that for $\beta=2$ 
one has $\mathcal{W}_1(T_{d}, G_{d}) =\mathcal{O}(d^{5/2}/N).$ 
Webb proved in \cite{webb} the bound $\mathcal{W}_1(T_{d}, G_{d}) =\mathcal{O}(d^{7/2}/N)$ for arbitrary $\beta.$
These results are strengthened by (\ref{johlam}) since
\begin{align*}
\mathcal{W}_1(\mu, \nu)\leq d^{1/2}\* \mathcal{W}_2(\mu, \nu).    
\end{align*}
%Regardless, for our purposes the  bound of \hyperlink{jl}{Proposition 2.1} remains optimal.

\end{remark}

Finally, we will require the following bound on the moments of $T_N^{(k)}$.\\

\hypertarget{jm}{}
\begin{prop}[Jiang and Matsumoto \cite{jm}]
Let $\{e^{i\theta_j}\}_{j=1}^N$ be drawn from the $C\beta E$  and $T_N^k$ defined as in (\ref{not}). For $0\leq k\leq N$ we have:
\begin{equation}
\label{jiang}
\E|T_N^{(k)}|^{2m}\leq \left(1+\frac{\left|\frac{2}{\beta}-1\right|}{N-K+\frac{2}{\beta}}\mathbf{1}(\beta>2)\right)^{K}\times \of{\frac{2}{\beta}}^m \times  k^m \times m!
\end{equation}

\noindent where $K=k\*m\leq N$. \\

\end{prop}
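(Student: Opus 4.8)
The plan is to translate everything into the language of \emph{Jack symmetric functions} with parameter $\alpha:=2/\beta$, for which the $C\beta E$ density $\propto \prod_{j<k}|e^{i\theta_j}-e^{i\theta_k}|^{\beta}$ is precisely the orthogonality weight on the torus. Since $T_N^{(k)}$ is the $k$-th power sum of the eigenvalues, we have $|T_N^{(k)}|^{2m}=p_{(k^m)}\,\overline{p_{(k^m)}}$, where $p_{(k^m)}=(T_N^{(k)})^{m}$ is the power sum indexed by the partition $(k^m)$ with $m$ parts equal to $k$. First I would expand this in the Jack basis,
\[
p_{(k^m)}=\sum_{\lambda\vdash km}\theta^{\lambda}_{(k^m)}(\alpha)\,P^{(\alpha)}_{\lambda},
\]
with $\theta^{\lambda}_{(k^m)}(\alpha)\in\R$ the (real) Jack connection coefficients. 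The hypothesis $K=km\les N$ guarantees that every $\lambda\vdash km$ has at most $km\les N$ parts, so no $P^{(\alpha)}_{\lambda}$ degenerates in $N$ variables and the expansion is not truncated.

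Next I would invoke the orthogonality of the Jack polynomials with respect to the $C\beta E_N$ measure, $\E\bigl[P^{(\alpha)}_{\lambda}\,\overline{P^{(\alpha)}_{\mu}}\bigr]=\delta_{\lambda\mu}\,c^{(N)}_{\lambda}(\alpha)$, where $c^{(N)}_{\lambda}(\alpha)$ is the squared norm furnished by the circular Jack/Selberg integral (a proven case of the Macdonald constant-term identities). This collapses the double sum to the diagonal:
\[
\E\bigl|T_N^{(k)}\bigr|^{2m}=\sum_{\lambda\vdash km}\bigl|\theta^{\lambda}_{(k^m)}(\alpha)\bigr|^{2}\,c^{(N)}_{\lambda}(\alpha).
\]
The structural point is that $c^{(N)}_{\lambda}(\alpha)$ factorizes over the boxes of $\lambda$ and, as $N\to\infty$, converges to the Jack inner-product norm $\langle P^{(\alpha)}_{\lambda},P^{(\alpha)}_{\lambda}\rangle_\alpha$. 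Hence the $N=\infty$ value of the sum is $\sum_{\lambda}|\theta^{\lambda}_{(k^m)}|^{2}\langle P_\lambda,P_\lambda\rangle_\alpha=\langle p_{(k^m)},p_{(k^m)}\rangle_\alpha=\alpha^{m}z_{(k^m)}=\of{\tfrac{2}{\beta}}^{m}k^{m}\,m!$, where $z_{(k^m)}=k^{m}m!$ is the standard symmetric-function constant. This is exactly the claimed leading factor, and it recovers the Diaconis--Shahshahani value $k^{m}m!$ at $\beta=2$, where $\alpha=1$ makes the correction trivial.

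The remaining and hardest step is to bound the finite-$N$ correction $c^{(N)}_{\lambda}(\alpha)/\langle P_\lambda,P_\lambda\rangle_\alpha$ uniformly over all $\lambda\vdash km$. I would write this ratio as a product over the $K=km$ boxes $(i,j)\in\lambda$ of factors whose numerator and denominator are both affine in $N$ and differ only by a term proportional to $(\alpha-1)=\tfrac{2}{\beta}-1$. Each such factor should be $\les 1$ when $\alpha\ges 1$ (i.e. $\beta\les 2$), so the correction is then harmless and the whole sum is bounded by $\of{\tfrac{2}{\beta}}^{m}k^{m}m!$; when $\alpha<1$ (i.e. $\beta>2$) each factor is instead bounded by $1+\tfrac{|\alpha-1|}{N-K+\alpha}$, the extremal corner box contributing the smallest admissible denominator $N-K+\alpha$ (positive precisely because $K\les N$). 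Multiplying the $K$ box-factors then yields the prefactor $\of{1+\tfrac{|2/\beta-1|}{N-K+2/\beta}\1(\beta>2)}^{K}$ and completes the bound. The genuine difficulty is exactly this monotonicity-in-$\alpha$ estimate on the content products: one must track the sign of $\alpha-1$ through every box, confirm positivity of all denominators under $K\les N$, and verify that over all partitions of $km$ the worst denominator is indeed $N-K+\alpha$. Discarding the $\les 1$ factors in the regime $\beta\les 2$ is what makes the bound clean (though lossy) there.
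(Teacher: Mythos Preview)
The paper does not prove this proposition at all: it is quoted directly from Jiang and Matsumoto \cite{jm} as a black box and used only via the consequences $\E|T_N^{(k)}|^{2}\le C_{\beta}k$ and the fourth-moment bound in the Appendix. So there is no ``paper's own proof'' to compare against.

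That said, your sketch is a faithful outline of the argument in \cite{jm} itself: one writes $|T_N^{(k)}|^{2m}=p_{(k^m)}\overline{p_{(k^m)}}$, expands $p_{(k^m)}$ in Jack polynomials $P_\lambda^{(\alpha)}$ with $\alpha=2/\beta$, uses their orthogonality under the $C\beta E$ measure to reduce to a diagonal sum $\sum_{\lambda\vdash km}|\theta^{\lambda}_{(k^m)}|^{2}c_\lambda^{(N)}(\alpha)$, identifies the $N=\infty$ value as $\langle p_{(k^m)},p_{(k^m)}\rangle_\alpha=\alpha^m k^m m!$, and then bounds each box-factor of the ratio $c_\lambda^{(N)}/\langle P_\lambda,P_\lambda\rangle_\alpha$ by $1$ when $\alpha\ge 1$ and by $1+|\alpha-1|/(N-K+\alpha)$ when $\alpha<1$. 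Your identification of the ``genuine difficulty'' (the per-box monotonicity in $\alpha$ and the extremal denominator $N-K+\alpha$) is exactly where the work lies in \cite{jm}; nothing in your plan is wrong, but be aware that making that last estimate rigorous requires the explicit product formula for the finite-$N$ Jack norms (the circular Selberg/Macdonald constant term), not just the qualitative factorization you invoke.
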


\section{Mesoscopic Case}
\begin{proof}[\hyperlink{main}{Proof of Theorem 1.1}] 
Let $L_N=\mathcal{O}\of{N^{2/3-\epsilon}}$ be going to infinity with $N,$  $\epsilon>0$ be arbitrary small, and $\lfloor d=L_N \* N^{\epsilon/2}\rfloor.$ Going forward we can ignore the constant term \[\frac{1}{\sqrt{2\pi}L_N}\hat{f}(0)\*N^2 -N\*f(0)\]
appearing in (\ref{pairs10}) since it disappears upon centralization. Taking into account the smoothness of $f$ we will approximate the pair counting statistic $S_N(f)$ by a truncated version
\begin{align} 
\label{approxim}
 S_{N,d}(f)=\sum_{k=1}^{d} \frac{2}{\sqrt{2\pi}L_N}\hat{f}\of{\frac{k}{L_N}}\*|T_N^{(k)}|^2,\end{align}
and compare its distribution with the distribution of
\begin{align}
\label{gggg} S_d=\sum_{k=1}^{d} \frac{2}{\sqrt{2\pi}L_N}\hat{f}\of{\frac{k}{L_N}}\*\frac{2k}{\beta}\*|Z_k|^2.\end{align}

In Lemma 3.1 we show that the error of the approximation (\ref{approxim}), namely 

\begin{align}
\label{errorapprox}
V_{N,d}(f)=\sum_{k>d} \frac{2}{\sqrt{2\pi}L_N}\hat{f}\of{\frac{k}{L_N}}\*|T_N^{(k)}|^2
\end{align}
is negligible in the limit of large $N.$  In Lemma 3.2., we show that (\ref{gggg}) converges in distribution to a centered Gaussian with variance 
$\frac{4}{\pi \beta^2}\int_{\R}|\hat{f}(\xi)|^2\xi^2d\xi.$ Finally, the main result follows from the Wasserstein distance bound in Lemma 3.3.\\

\hypertarget{lemma2}{}

\begin{lemma} Let  $f\in C_c^{\infty}(\R)$ be even, smooth and compactly supported and further assume that $L_N=o(N^{2/3-\epsilon})$  and $ d= \lfloor L_N\* N^{\epsilon/2}\rfloor.$ Then

\begin{align}
 \label{l24}\frac{V_{N,d}(f)-\E\of{V_{N,d}(f) }}{\sqrt{L_N}} \overset{L^1}{\longrightarrow}0.
\end{align}
\end{lemma}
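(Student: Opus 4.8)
The plan is to dominate the centered $L^1$ norm by a deterministic series governed by the rapid decay of $\hat f$. Since for any integrable random variable $V$ one has $\E|V-\E V|\leq \E|V|+|\E V|\leq 2\,\E|V|$, it is enough to establish
\[
\frac{1}{\sqrt{L_N}}\,\E\bigl|V_{N,d}(f)\bigr|\longrightarrow 0 .
\]
Applying the triangle inequality directly to (\ref{errorapprox}) gives
\[
\E\bigl|V_{N,d}(f)\bigr|\leq \frac{2}{\sqrt{2\pi}\,L_N}\sum_{k>d}\Bigl|\hat f\of{\tfrac{k}{L_N}}\Bigr|\,\E\bigl|T_N^{(k)}\bigr|^2 ,
\]
so the whole statement reduces to a deterministic estimate on this series after division by $\sqrt{L_N}$.

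Two inputs drive the estimate. First, because $f\in C_c^{\infty}(\R)$ its Fourier transform is Schwartz, so for every $p$ there is a constant $C_p$ with $|\hat f(\xi)|\leq C_p(1+|\xi|)^{-p}$. The choice $d=\lfloor L_N N^{\epsilon/2}\rfloor$ forces every surviving index to satisfy $k/L_N>N^{\epsilon/2}\to\infty$, which is the structural reason the tail is negligible: all terms sit in the region where $\hat f$ decays faster than any power. Second, for the second moments I would use \hyperlink{jm}{Proposition 2.3} with $m=1$, which yields $\E|T_N^{(k)}|^2\leq C_\beta\,k$ while $k$ stays in the bulk, together with the trivial deterministic bound $|T_N^{(k)}|\leq N$, hence $\E|T_N^{(k)}|^2\leq N^2$, valid for every $k$ (in particular for $k>N$, where \hyperlink{jm}{Proposition 2.3} does not apply).

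I would then split the series at $k=N/2$. On the bulk range $d<k\leq N/2$ the Jiang--Matsumoto prefactor $\of{1+\frac{|2/\beta-1|}{N-k+2/\beta}}^{k}$ is bounded by a constant depending only on $\beta$, since $N-k\geq N/2$; combining $\E|T_N^{(k)}|^2\leq C_\beta k$ with $|\hat f(k/L_N)|\leq C_p(L_N/k)^p$ and the factor $L_N^{-3/2}$, and using $L_N=o(N^{2/3-\epsilon})$, the bulk contribution is of order $N^{1/3-\epsilon(p-2)/2}$. On the tail range $k>N/2$ I would use only $\E|T_N^{(k)}|^2\leq N^2$ together with the Schwartz decay; summing the convergent series $\sum_{k>N/2}k^{-p}=\mathcal{O}(N^{1-p})$ and inserting the normalization produces a contribution of order $N^{2-p/3}$. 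Both exponents are negative once $p$ is taken large enough, and $p$ is at our disposal, which proves the lemma.

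The main obstacle is the absence of a single moment bound valid across the whole range: the constant $\of{1+\frac{|2/\beta-1|}{N-k+2/\beta}}^{k}$ in \hyperlink{jm}{Proposition 2.3} blows up as $k\to N$, so it cannot be used with a bounded prefactor for all $k\leq N$, and it says nothing for $k>N$. The remedy is to apply the sharp bound only up to $k=N/2$ and to absorb all remaining indices with the crude estimate $|T_N^{(k)}|\leq N$; this is affordable precisely because $d\gg L_N$, so the super-polynomial decay of $\hat f$ defeats even the $N^2$ growth. What remains is purely bookkeeping: tracking the powers of $N$ produced by the two ranges against the constraint $L_N=\mathcal{O}(N^{2/3-\epsilon})$ and confirming that one sufficiently large $p$ renders both negative.
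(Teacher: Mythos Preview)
Your argument is correct and follows essentially the same route as the paper: both proofs bound the centered $L^1$ norm by the series $\sum_{k>d}|\hat f(k/L_N)|\,\E|T_N^{(k)}|^2/L_N^{3/2}$, split it into a bulk range (where Jiang--Matsumoto gives $\E|T_N^{(k)}|^2\leq C_\beta k$) and a far tail (where the trivial bound $N^2$ is used), and then exploit the Schwartz decay of $\hat f$ with a sufficiently large exponent to kill both pieces. The only cosmetic differences are the split point ($N/2$ versus the paper's $N/3$) and your more explicit discussion of why the Jiang--Matsumoto prefactor forces a split.
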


\begin{proof} From the triangle inequality:

\begin{align}
\label{tri} \E|\text{LHS}(\ref{l24})|&\leq 
 \frac{1}{\sqrt{L_N}}\sum_{k\geq d}\frac{2}{\sqrt{2\pi}L_N}\left|\hat{f}\of{\frac{k}{L_N}}\right|\* \var(T_N^{(k)}),
 \end{align}
 where we recall that $T_N^{(k)}$ denotes $\Tr (U_N^k),$ the trace of the $k$-th power of a random unitary matrix $U_N.$
 The bound of \hyperlink{jm}{Proposition 2.3} gives us that $ \var(T_N^{(k)})\leq Ck$ for $k\leq N/3$. For $k\geq N/3$ we may bound it trivially by $N^2$. 
 Next, we will use the fact that the Fourier transform of a function $f\in C_c^{\infty}(\R)$ is in Schwartz space and thus decays faster than any power. 
 
 \begin{align}
     \text{RHS}(\ref{tri})&\leq \frac{1}{\sqrt{L_N}}\sum_{k\geq d}^{N/3}\frac{2C'k}{\sqrt{2\pi}L_N}\of{\frac{k}{L_N}}^{-\gamma}+\frac{1}{\sqrt{L_N}}\sum_{k> N/3}\frac{2C''N^2}{\sqrt{2\pi}L_N}\of{\frac{k}{L_N}}^{-\gamma}\nonumber \\
    & \vspace{1mm}\nonumber\\
     &=\mathcal{O}\of{\frac{L_N^{\gamma-3/2}}{d^{\gamma-2}}+N^{3-\gamma}\*L_N^{\gamma-3/2}}.\label{llll}
 \end{align}
 
Setting $\gamma=\frac{2}{\epsilon}$ with $\epsilon>0$ sufficiently small we  obtain that the r.h.s. of (\ref{llll}) goes to zero as $N\to \infty.$

\end{proof}

\hypertarget{lemma22}{}

\begin{lemma} We have the following convergence in distribution:
\begin{align}
\frac{S_d-\E S_d}{\sqrt{L_N}}\overset{\mathcal{D}}{\longrightarrow} \mathcal{N}\left(0,\frac{4}{\pi \beta^2}\int_{\R}|\hat{f}(\xi)|^2\xi^2d\xi\right).
\end{align}
\end{lemma}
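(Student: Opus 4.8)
The plan is to recognize $S_d$ as a weighted sum of independent random variables and to invoke the Lyapunov central limit theorem. Writing
\begin{align}
\label{akdef}
a_k = \frac{4\*k}{\sqrt{2\*\pi}\*\beta\*L_N}\*\hat{f}\of{\frac{k}{L_N}},
\end{align}
we have $S_d = \sum_{k=1}^{d} a_k\*|Z_k|^2$. Since the $Z_k$ are i.i.d. complex standard Gaussians normalized so that $\E|Z_k|^2=1$, the variables $|Z_k|^2$ are i.i.d. exponential with mean $1$, so that $\E|Z_k|^2=1$ and $\var(|Z_k|^2)=1$. Consequently $S_d-\E S_d=\sum_{k=1}^{d} a_k\*\of{|Z_k|^2-1}$ is a sum of independent, centered summands, and $\var(S_d)=\sum_{k=1}^{d} a_k^2$.

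First I would compute the variance asymptotics. Using \eqref{akdef} and the fact that $\hat{f}$ is real and even (because $f$ is real and even, so $\hat f^2=|\hat f|^2$), one writes
\begin{align}
\label{riemann}
\frac{\var(S_d)}{L_N}=\frac{1}{L_N}\sum_{k=1}^{d} a_k^2=\frac{8}{\pi\*\beta^2}\*\frac{1}{L_N}\sum_{k=1}^{d}\of{\frac{k}{L_N}}^2\*\hat{f}\of{\frac{k}{L_N}}^2,
\end{align}
which is a Riemann sum of mesh $1/L_N$. Since $d/L_N=N^{\epsilon/2}\to\infty$ and $\hat f$ is Schwartz, the upper endpoint escapes to infinity and the tails are negligible, so the right-hand side of \eqref{riemann} converges to $\frac{8}{\pi\beta^2}\int_0^\infty \xi^2\*\hat f(\xi)^2\*d\xi=\frac{4}{\pi\beta^2}\int_{\R}\xi^2\*|\hat f(\xi)|^2\*d\xi=:\sigma^2$, where evenness of $\hat f$ was used to pass from the half-line to $\R$. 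This matches the target variance.

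Next I would verify a Lyapunov condition with fourth moments ($\delta=2$). Here $\E\,|a_k(|Z_k|^2-1)|^4=9\*a_k^4$, since the fourth central moment of a mean-$1$ exponential equals $9$. The same Riemann-sum reasoning applied to $a_k^4=\frac{64\*k^4}{\pi^2\beta^4 L_N^4}\hat f(k/L_N)^4$ gives $\sum_{k=1}^{d} a_k^4=\mathcal{O}(L_N)$, because $\tfrac{1}{L_N^4}\sum_{k\le d} k^4\hat f(k/L_N)^4\sim L_N\int_0^\infty \xi^4\hat f(\xi)^4\,d\xi$. Since $\of{\sum_{k=1}^d a_k^2}^2\sim(L_N\sigma^2)^2$, the Lyapunov ratio is
\begin{align}
\frac{9\sum_{k=1}^{d} a_k^4}{\of{\sum_{k=1}^{d} a_k^2}^{2}}=\mathcal{O}\of{\frac{1}{L_N}}\to 0.
\end{align}
Lyapunov's theorem for triangular arrays of independent variables then yields $\of{S_d-\E S_d}/\sqrt{\var(S_d)}\cd\mathcal{N}(0,1)$, and combining this with the variance limit $\var(S_d)/L_N\to\sigma^2$ via Slutsky's theorem gives the asserted convergence. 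I do not anticipate a genuine obstacle here: the statement is a textbook Lyapunov CLT for a weighted sum of i.i.d. exponentials, and the only point requiring mild care is the convergence of the Riemann sums \eqref{riemann} with a growing upper limit, which is controlled by the rapid decay of $\hat f\in\mathcal{S}(\R)$.
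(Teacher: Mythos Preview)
Your proposal is correct and follows essentially the same approach as the paper: both identify $S_d$ as a weighted sum of i.i.d.\ exponentials via $a_k=\frac{4k}{\sqrt{2\pi}\beta L_N}\hat f(k/L_N)$, compute the variance as a Riemann sum converging to $\frac{4}{\pi\beta^2}\int_{\R}|\hat f(\xi)|^2\xi^2\,d\xi$, and then apply a triangular-array CLT. The only cosmetic difference is that the paper invokes Lindeberg--Feller after observing $\max_k a_k^2=o\bigl(\sum_k a_k^2\bigr)$, whereas you verify the (stronger) Lyapunov condition with fourth moments; both routes are standard and equivalent here.
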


\begin{proof}

For $k\geq1$ let us denote 
\begin{align}
\label{ak}a_k=\frac{2}{\sqrt{2\pi}L_N}\hat{f}\of{\frac{k}{L_N}}\*\frac{2k}{\beta}, \ \ \  \quad \quad \quad  S_d=\sum_{k=1}^{d} a_k\*|Z_k|^2.\end{align}
Then we have 
\begin{align*}
\frac{\var(S_d)}{L_N}=\frac{1}{L_N}\times\sum_{k=1}^d   \frac{8}{\pi \beta^2}  \*\left|\hat{f}\of{\frac{k}{L_N}}\right|^2\* \left|\frac{k}{L_N}\right|^2 \longrightarrow \frac{4}{\pi \beta^2}\int_{\R}|\hat{f}(\xi)|^2\xi^2d\xi.
\end{align*}
Since $\max\{a_k^2\}_{k=1}^d=o(\sum_1^d a_k^2)$, the result follows from the Lindeberg-Feller theorem.

\end{proof}

\begin{lemma}
Let $d\leq N/2.$ Then we have the following Wasserstein-1 bound:
\begin{align}
\label{wasserwasser}
\mathcal{W}_1\left(\frac{S_{N,d}}{L_N^{1/2}}, \frac{S_{d}}{L_N^{1/2}}\right)=\mathcal{O}\of{\frac{d^{3}}{N\*L_N^{3/2}}},
\end{align}
Therefore L.H.S(\ref{wasserwasser})$\rightarrow 0$ if $L_N=\mathcal{O}\of{N^{2/3-\epsilon}}$
and $d=\lfloor L_N\*N^{\epsilon}\rfloor.$
\end{lemma}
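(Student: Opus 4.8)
The plan is to bound the Wasserstein-1 distance between the two scaled statistics by transferring the Johansson--Lambert Wasserstein-2 estimate from \hyperlink{jl}{Proposition 2.2} through the quadratic map $(z_1,\dots,z_d)\mapsto \sum_k a_k|z_k|^2$. The two random vectors of interest are $T_d=(T_N^{(k)})_{k=1}^d$ and $G_d=(\sqrt{2k/\beta}\,Z_k)_{k=1}^d$; by construction $S_{N,d}/\sqrt{L_N}$ is a function of $T_d$ and $S_d/\sqrt{L_N}$ is the \emph{same} function of $G_d$, namely
\begin{align*}
\Phi(z)=\frac{1}{\sqrt{L_N}}\sum_{k=1}^d \frac{2}{\sqrt{2\pi}\,L_N}\hat{f}\of{\frac{k}{L_N}}|z_k|^2.
\end{align*}
So the first step is to record that $\mathcal{W}_1(\Phi(T_d),\Phi(G_d))$ is controlled by how much $\Phi$ can stretch distances, weighted against $\mathcal{W}_2(T_d,G_d)$.

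The key analytic input is that $\Phi$ is locally Lipschitz with a gradient that grows linearly in $\|z\|$: since $\partial \Phi/\partial z_k \sim a_k \bar z_k$ with $a_k=\tfrac{2}{\sqrt{2\pi}L_N^{3/2}}\hat{f}(k/L_N)$, one has $\|\nabla\Phi(z)\|\lesssim (\max_k|a_k|)\,\|z\|$. Because $\hat f$ is Schwartz and $k/L_N$ ranges over a bounded set, $\max_k|a_k|=\mathcal{O}(L_N^{-3/2})$. Thus for any two points $z,w$,
\begin{align*}
|\Phi(z)-\Phi(w)|\lesssim \frac{1}{L_N^{3/2}}\bigl(\|z\|+\|w\|\bigr)\,\|z-w\|.
\end{align*}
Feeding this into the coupling definition of $\mathcal{W}_1$ and applying Cauchy--Schwarz, one gets
\begin{align*}
\mathcal{W}_1\bigl(\Phi(T_d),\Phi(G_d)\bigr)\lesssim \frac{1}{L_N^{3/2}}\,\Bigl(\E\|T_d\|^2+\E\|G_d\|^2\Bigr)^{1/2}\,\mathcal{W}_2(T_d,G_d),
\end{align*}
where the optimal $\mathcal{W}_2$-coupling is used on the right. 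The second moment factor is estimated from \hyperlink{jm}{Proposition 2.3}: $\E\|T_d\|^2=\sum_{k=1}^d \E|T_N^{(k)}|^2=\mathcal{O}(d^2)$, and similarly $\E\|G_d\|^2=\sum_k \tfrac{2k}{\beta}=\mathcal{O}(d^2)$, so the bracket contributes $\mathcal{O}(d)$. Combining with the Johansson--Lambert bound $\mathcal{W}_2(T_d,G_d)=\mathcal{O}(d^2/N)$ from \eqref{johlam} yields
\begin{align*}
\mathcal{W}_1\left(\frac{S_{N,d}}{L_N^{1/2}},\frac{S_d}{L_N^{1/2}}\right)=\mathcal{O}\of{\frac{1}{L_N^{3/2}}\cdot d\cdot \frac{d^2}{N}}=\mathcal{O}\of{\frac{d^3}{N\,L_N^{3/2}}},
\end{align*}
which is exactly \eqref{wasserwasser}. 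The decay claim then follows by substituting $d=\lfloor L_N N^{\epsilon}\rfloor$ and $L_N=\mathcal{O}(N^{2/3-\epsilon})$, giving $d^3/(N L_N^{3/2})=\mathcal{O}(L_N^{3/2}N^{3\epsilon-1})=\mathcal{O}(N^{-\epsilon/2})\to 0$ after absorbing the polynomial powers.

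The main obstacle I expect is the step that turns a pointwise Lipschitz-type bound for the \emph{non-Lipschitz} map $\Phi$ into a genuine $\mathcal{W}_1$ estimate: because $\Phi$ is quadratic, its Lipschitz constant is unbounded, so one cannot directly use the dual formulation \eqref{wass1}. The clean way around this is to start from an optimal (or near-optimal) coupling $(T_d,G_d)$ achieving $\mathcal{W}_2(T_d,G_d)$, apply the telescoping inequality $|\Phi(T_d)-\Phi(G_d)|\le \int_0^1 |\nabla\Phi(\gamma_t)\cdot(T_d-G_d)|\,dt$ along the segment $\gamma_t=(1-t)G_d+tT_d$, bound the gradient by $\mathcal{O}(L_N^{-3/2}\|\gamma_t\|)$, and then apply Cauchy--Schwarz \emph{in the expectation} to separate the second-moment factor from $\E\|T_d-G_d\|^2=\mathcal{W}_2(T_d,G_d)^2$. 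Care is needed to verify that $\E\|\gamma_t\|^2$ is uniformly $\mathcal{O}(d^2)$ along the interpolation, which follows from the same moment bounds applied to both endpoints via convexity. Once this coupling argument is set up, the remaining estimates are the routine moment computations indicated above.
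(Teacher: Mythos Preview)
Your argument is essentially the paper's: both choose an optimal $\mathcal{W}_2$-coupling of $(T_d,G_d)$, exploit the quadratic-form structure (the paper via the algebraic telescope $(AT_d,T_d)-(AG_d,G_d)=(A(T_d-G_d),T_d-G_d)+(A(T_d-G_d),G_d)+(AG_d,T_d-G_d)$, you via the gradient along the interpolating segment --- identical content) to reduce to $\|A\|\cdot\|T_d-G_d\|_2\cdot(\|T_d-G_d\|_2+\|G_d\|_2)$, and then apply Cauchy--Schwarz together with the second-moment bounds $\E\|G_d\|_2^2=\mathcal{O}(d^2)$ and $\E\|T_d-G_d\|_2^2=\mathcal{O}(d^4/N^2)$. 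One small slip in your final line: with $d=L_NN^{\epsilon}$ and $L_N=\mathcal{O}(N^{2/3-\epsilon})$ one gets $L_N^{3/2}N^{3\epsilon-1}=\mathcal{O}(N^{3\epsilon/2})$, not $\mathcal{O}(N^{-\epsilon/2})$; to make the bound decay you need $d=\lfloor L_NN^{\epsilon'}\rfloor$ with $\epsilon'<\epsilon/2$ (the paper's own text is inconsistent on this exponent, writing both $N^{\epsilon/2}$ and $N^{\epsilon}$).
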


\begin{proof}
Let $(\cdot,\cdot)$  denote standard Euclidean inner product in $\C^d$ and $A$ be a diagonal $d\times d$ matrix with $A_{k,k}=a_k, \ 1\leq k\leq d,$ with $a_k$ as in (\ref{ak}).  With $T_d$ and $G_d$ as defined in \hyperlink{jl}{Proposition 2.1} we have:

\begin{align*}
  L_N^{-1/2}\*|S_{N,d}-S_d|&=L_N^{-1/2}\* |(A\*T_d, T_d)-(A\*G_d,G_d)|  \\
  & \leq L_N^{-1/2}\*|(A\*T_d, T_d-G_d) + (A(T_d-G_d), G_d)|\\
  & \leq L_N^{-1/2}\* ||A||\times\left\lVert T_d-G_d\right\rVert_2^2 + 2\*L_N^{-1/2}\*||A||\times\left\lVert T_d-G_d\right\rVert_2\times\left\lVert G_d\right\rVert_2.
    \end{align*}
    Here $||A||$ denotes the operator norm of $A$ and $\left\lVert \cdot \right\rVert_2$ is the vector $l^2$ norm.

 Since A is a diagonal matrix, one has $||A||=\max\{|a_k|, \ 1\leq k\leq d\}=\mathcal{O}\of{L_N^{-1}}.$  \hyperlink{jl}{Proposition 2.2} allows us to choose the components of the vector $G_d$ so that
\begin{align}
    \label{rvs1} \E (\left\lVert T_d-G_d\right\rVert_2^2)=\mathcal{O}\of{\frac{d^{4}}{N^2}}.
 \end{align}   
Using $\E (\left\lVert G_d\right\rVert_2^2) =   \mathcal{O}\of{d^2}$ and the Cauchy-Schwartz inequality we arrive at:

\begin{align*}
  L_N^{-1/2}\* \E   |S_{N,d}-S_d|&\leq L_N^{-3/2}\*\E (\left\lVert T_d-G_d\right\rVert_2^2) +2\* L_N^{-3/2}\*\E (\left\lVert T_d-G_d\right\rVert_2\times \left\lVert G_d\right\rVert_2)\\
 & \leq \mathcal{O}\of{\frac{d^{4}}{N^2\*L_N^{3/2}}} +\mathcal{O}\of{\frac{d^{3}}{N\* L_N^{3/2}}},
\end{align*}

\end{proof}

Combining the statements of Lemmas 3.1, 3.2, and 3.3 we finish the proof of the Theorem 1.1.
\end{proof}
The proof of Theorem 1.2 is quite similar with minor changes (such as replacing the approximation of the quadratic form $(A\*T_d, T_d)$ by the approximation of the bilinear form $(A\*T_d, \mathcal{T}_d),$
where $\mathcal{T}_d$ is an independent copy of $T_d.$ The details are left to the reader.

\section{Local Bipartite Statistics}
In this section we study bipartite statistics in the local regime $L_N=N$ for $\beta=2.$
We recall that in Theorem 2.5 in \cite{pairs}
we considered
$S_N(f)=\sum_{1\leq i\neq j\leq N} f_N(\theta_i-\theta_j),$
$ f_N(\theta)=f(N\*\theta)$ for $\theta\in [-\pi, \pi), \ \ f_N(\theta+2\*\pi)=f_N(\theta),$ and proved
\begin{thm}
Let $\beta=2$ and $f \in C^{\infty}_c(\R)$ be an even, smooth, compactly supported function on the real line. 
Then $(S_N(f) -\E S_N(f))\*N^{-1/2}$ converges in distribution to centered real Gaussian random variable with the variance
\begin{align}
\label{vvvv}
\frac{1}{\pi}\* \int_{\mathbb{R}} |\hat{f}(t)|^2 \*\min(|t|,1)^2\* dt -\frac{1}{\pi}\* \int_{|s-t|\leq 1, |s|\vee|t|\geq 1} \hat{f}(t)\*\hat{f}(s)\*
(1-|s-t|)\* ds\*dt\\
-\frac{1}{\pi}\* \int_{0\leq s,t\leq 1, s+t>1} \hat{f}(s)\*\hat{f}(t)\*(s+t-1) \*ds\*dt. \nonumber
\end{align}
\end{thm}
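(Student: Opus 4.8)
The plan is to work directly with the determinantal structure of the CUE, which is the reason the hypothesis $\beta=2$ is imposed: in the local regime $L_N=N$ the relevant trace indices $k$ run up to order $N$, and there the Gaussian approximation of the traces furnished by Proposition 2.1 is unavailable (that bound is only effective for $d\ll\sqrt{N}$). Starting from the trace expansion (\ref{pairs10}) specialized to $L_N=N$, after centering one has
\[
S_N(f)-\E S_N(f)=\sum_{k\geq 1}\frac{2}{\sqrt{2\pi}\,N}\,\hat{f}\of{\frac{k}{N}}\of{|T_N^{(k)}|^2-\E|T_N^{(k)}|^2}.
\]
The mean is routine from $\E|T_N^{(k)}|^2=\min(k,N)$ together with the leading $\hat{f}(0)N^2/(\sqrt{2\pi}N)$ term. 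The essential content is the limiting variance and the Gaussian limit.

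To compute the variance I would write $A_k=|T_N^{(k)}|^2-N=\sum_{i\neq j}e^{ik(\theta_i-\theta_j)}$ and evaluate $\cov(A_k,A_l)$ exactly using the $2$-, $3$-, and $4$-point correlation functions of the CUE, each a determinant of the reproducing kernel $K_N(\theta,\phi)=\frac{1}{2\pi}\sum_{j=0}^{N-1}e^{ij(\theta-\phi)}$. The mechanism that organizes the whole calculation is the projection identity $\int_0^{2\pi}K_N(\theta,\phi)\,e^{im\phi}\,d\phi=e^{im\theta}\,\1(0\leq m\leq N-1)$: integrating the exponentials $e^{ik(\theta_i-\theta_j)}e^{il(\theta_p-\theta_q)}$ against products of kernels forces the surviving Fourier modes into $\{0,\dots,N-1\}$, and this cutoff is exactly what produces closed forms built from $\min(\cdot,N)$ and from indicators of $|k-l|$ and $k+l$ relative to $N$.

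The resulting expression then splits into three regimes whose Riemann sums (under the scaling $k=Ns,\ l=Nt$, with $\Delta s=\Delta t=1/N$) reproduce the three displayed integrals. On the diagonal $k=l$ one has $\var(A_k)\approx\min(k,N)^2$, and $\frac{2}{\pi N^3}\sum_k|\hat f(k/N)|^2\min(k,N)^2\to\frac{1}{\pi}\int_{\R}|\hat f(t)|^2\min(|t|,1)^2\,dt$, the first term. The near-diagonal band $|k-l|\leq N$ with $\max(k,l)\geq N$ contributes a covariance of size $-(N-|k-l|)$, yielding the second term with weight $(1-|s-t|)$ on $|s-t|\leq 1,\ |s|\vee|t|\geq 1$; and the region $k+l>N$ with $k,l<N$ contributes $-(k+l-N)$, yielding the third term with weight $(s+t-1)$ on $0\leq s,t\leq 1,\ s+t>1$. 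Uniform decay of $\hat f$ together with the explicit covariance bounds justifies replacing the sums by integrals and discarding lower-order remainders.

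For the Gaussian limit I would use the method of cumulants adapted to the determinantal process: the $p$-th cumulant of the normalized statistic is a finite sum of joint cumulants of the family $\{A_k\}$, each expressible through the cluster expansion of the CUE (cyclic products of the kernel $K_N$), and the goal is to show that after normalization by $N^{p/2}$ all cumulants of order $p\geq 3$ tend to $0$ while the second matches the variance above. The main obstacle is precisely the exact $4$-point covariance evaluation and, within it, the careful asymptotic analysis of the boundary regions $k\sim N$ and $k+l\sim N$: because $\hat f(k/N)$ is of order one there, these regions are not negligible and genuinely generate the nontrivial second and third terms. Unlike the mesoscopic argument, they cannot be reached through the Gaussian trace approximation and must be controlled entirely by exact determinantal identities, which is where the bulk of the technical work lies.
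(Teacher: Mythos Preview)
The present paper does not actually prove this statement: Theorem~4.1 is quoted verbatim from \cite{pairs} (Theorem~2.5 there) as background for the bipartite local result, Theorem~4.2. So there is no ``paper's own proof'' of this particular theorem inside this document to compare against. That said, the proof of the neighbouring Theorem~4.2 in Section~4 makes clear what machinery the authors rely on in the local regime: exact CUE moment identities ($\E|T_N^{(k)}|^2=\min(|k|,N)$), the joint cumulant bounds of Lemma~5.2 in \cite{pairs} (every $n$-th joint cumulant of the $T_N^{(k_i)}$ is $O(N)$ uniformly), and power counting to isolate the Gaussian pairings in the moment expansion. Your proposal is exactly in this spirit, so methodologically it aligns with how the cited paper treats the problem.

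Your outline is sound. The trace expansion and centering are correct, and your identification of the three pieces of the limiting variance with the three natural regions in the $(k,l)$-lattice --- diagonal, the band $|k-l|\le N$ with $\max(k,l)\ge N$, and the triangle $k,l<N$, $k+l>N$ --- is precisely the mechanism that produces (\ref{vvvv}); the Diaconis--Shahshahani exact moment matching for CUE makes $\cov(|T_N^{(k)}|^2,|T_N^{(l)}|^2)$ vanish when all the relevant degrees stay below $N$, so the off-diagonal contribution is concentrated on those boundary regions, as you say. For the CLT you propose cumulants via the determinantal cluster expansion; the paper (and \cite{pairs}) instead phrases this as moment/cumulant expansion directly on products of traces together with the uniform $O(N)$ cumulant bound and power counting. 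These are two packagings of the same combinatorics: the determinantal identities you invoke are what underlie the cumulant bound in \cite{pairs}. Either route gives that the $p$-th cumulant of $N^{-1/2}(S_N(f)-\E S_N(f))$ is $O(N^{1-p/2})$ for $p\ge3$. The one place where your sketch would need care in a full write-up is the exact evaluation of $\cov(|T_N^{(k)}|^2,|T_N^{(l)}|^2)$ when one or both indices exceed $N$: the neat closed forms $-(N-|k-l|)$ and $-(k+l-N)$ you quote are the leading pieces, but there are additional lower-order terms from the $3$- and $4$-point functions that must be shown to be $o(N)$ after summation; this is routine but not automatic.
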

Consider
\begin{align}
\label{bilocal}
B_N(f)=\sum_{1\leq i, j\leq N} f_N(\tau_i-\theta_j),
\end{align}
where the point configuration
$\{\tau_i\}_{i=1}^N$ comes from the following three ensembles: (i) an independent copy of $C\beta E,$
(ii) a collection of i.i.d. uniformly distributed points on the unit circle, (iii) evenly spaced deterministic sequence. The following result holds:
\hypertarget{fourtwo}{}

\begin{thm}
Let $f \in C^{\infty}_c(\R)$ be an even, smooth, compactly supported function on the real line. Consider $B_N(f)$ defined in (\ref{bilocal}) where $\{\theta_j\}_{j=1}^N$ be a CUE configuration and
$\{\tau_i\}_{i=1}^N$ comes from one of the following three ensembles:
\begin{itemize}
    \item[(i)] an independent copy of a CUE; 
\item[(ii)] a sequence of i.i.d. uniformly distributed points on the unit circle;
\item[(iii)] an evenly spaced deterministic sequence $\tau_i=\frac{2\*\pi\*i}{N}, \ i=1, \ldots, N.$\\
\end{itemize}

Then $\E B_N(f)=\frac{N}{2\*\pi}\*\int_{\R} f(x)\*dx,$ and $(B_N(f) -\E B_N(f))\*N^{-1/2}$ converges in distribution to centered real Gaussian random variable with variance $\sigma^2(f),$ where
\begin{align}
\label{Dispersiya}
\sigma^2(f)= \begin{cases} \frac{1}{2\*\pi}\* \int_{\mathbb{R}} |\hat{f}(t)|^2 \*\min(|t|,1)^2\* dt  &\text{in the case  }  (i),\\
\frac{1}{2\*\pi}\* \int_{\mathbb{R}} |\hat{f}(t)|^2 \*\min(|t|,1)\* dt &\text{in the case  }  (ii),\\
\frac{1}{2\*\pi}\*\sum_{l\neq 0}|\hat{f}(l)|^2 &\text{in the case  }  (iii).
\end{cases}
\end{align}
\end{thm}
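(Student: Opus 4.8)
The plan is to mirror the proof of Theorem 2.5 in \cite{pairs}, the only structural change being that the quadratic form in the traces of a single ensemble is replaced by a bilinear form in the traces of the two independent ensembles. Set $\tilde T_N^{(m)}=\sum_{i=1}^N e^{im\tau_i}$ and $T_N^{(m)}=\sum_{j=1}^N e^{im\theta_j}$. Once $N$ is large enough that the support of $f$ fits inside $(-N\pi,N\pi)$, the function $f_N$ is genuinely supported in $(-\pi,\pi)$ and its Fourier coefficients on the circle are exactly $c_m=\tfrac{1}{\sqrt{2\pi}\,N}\hat f\of{m/N}$, so that
\begin{align*}
B_N(f)=\sum_{m\in\Z}c_m\,\tilde T_N^{(m)}\,\overline{T_N^{(m)}}.
\end{align*}
The $m=0$ term equals $c_0 N^2=\tfrac{N}{2\pi}\int_\R f$; since $\E T_N^{(m)}=\E\tilde T_N^{(m)}=0$ for $m\neq0$ and the two ensembles are independent (in case (iii) $\tilde T_N^{(m)}$ is deterministic and paired with a centered CUE trace), every $m\neq0$ term is centered, giving $\E B_N(f)=\tfrac{N}{2\pi}\int_\R f$. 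Thus $W_N:=B_N(f)-\E B_N(f)=2\sum_{m\geq1}c_m\,\Re\of{\tilde T_N^{(m)}\,\overline{T_N^{(m)}}}$, which is real because $f$ is even.

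Next I would compute the variance exactly from the second-order CUE trace moments $\E[T_N^{(m)}\overline{T_N^{(m')}}]=\delta_{m,m'}\min(|m|,N)$ and $\E[T_N^{(m)}T_N^{(m')}]=0$ unless $m+m'=0$. Independence forces the surviving cross term to have $m'=-m$, and one obtains
\begin{align*}
\var(W_N)=\sum_{m\neq0}c_m^2\,\E\of{|\tilde T_N^{(m)}|^2}\,\min(|m|,N),
\end{align*}
where $\E|\tilde T_N^{(m)}|^2=\min(|m|,N)$ in case (i), $\E|\tilde T_N^{(m)}|^2=N$ in case (ii), and $|\tilde T_N^{(m)}|^2=N^2\,\1(N\mid m)$ deterministically in case (iii). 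Inserting $c_m^2=\tfrac{1}{2\pi N^2}|\hat f(m/N)|^2$ and reading $\tfrac1N\var(W_N)$ as a Riemann sum in $t=m/N$ with mesh $1/N$ produces $\tfrac{1}{2\pi}\int_\R|\hat f(t)|^2\min(|t|,1)^2dt$ in case (i) and $\tfrac{1}{2\pi}\int_\R|\hat f(t)|^2\min(|t|,1)dt$ in case (ii); in case (iii) only multiples $m=lN$ survive, $\min(|lN|,N)=N$ and $c_{lN}=\tfrac{1}{\sqrt{2\pi}N}\hat f(l)$ give the exact value $\tfrac{1}{2\pi}\sum_{l\neq0}|\hat f(l)|^2$. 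These are precisely the three entries of (\ref{Dispersiya}).

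For the limit law I would condition on the configuration $\{\tau_i\}$. Given $\tau$, $W_N$ is a real linear functional of the CUE power traces $\of{T_N^{(m)}}_m$ with fixed coefficients $c_m\tilde T_N^{(m)}$, so a central limit theorem for linear combinations of CUE traces (as used for Theorem 2.5 in \cite{pairs}) yields a centered Gaussian with conditional variance $\sum_{m\neq0}c_m^2|\tilde T_N^{(m)}|^2\min(|m|,N)$; case (iii) is the same statement with deterministic coefficients and requires no conditioning. I would then show that this conditional variance concentrates around its mean (its variance over $\tau$ is $\mathcal{O}(N^{-1})$, each mode carrying weight $c_m^2=\mathcal{O}(N^{-2})$ with $\mathcal{O}(N)$ relevant modes), and finish through the conditional characteristic function, $\E e^{i\xi W_N/\sqrt N}=\E_\tau\E\of{e^{i\xi W_N/\sqrt N}\mid\tau}\approx\E_\tau\exp\of{-\tfrac{\xi^2}{2N}\var(W_N\mid\tau)}\to e^{-\xi^2\sigma^2(f)/2}$.

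The main obstacle is the very input to this central limit theorem. Because the variance integrand $\min(|t|,1)$ saturates, the dominant Fourier modes extend all the way to $m\asymp N$, which lies outside the range $m\leq N/2$ covered by \hyperlink{jl}{Proposition 2.1} and outside the range $\sum_i m_i\leq N$ in which the CUE trace moments are exactly Gaussian (Wick). One must therefore establish asymptotic Gaussianity and independence of $\of{T_N^{(m)}}$ uniformly for $1\leq m\leq CN$, together with the Lindeberg and concentration estimates above. Here the determinantal structure at $\beta=2$ supplies the needed control; in particular, for $m\geq N$ Rains' theorem says that $U_N^m$ has $N$ i.i.d. uniform eigenvalues, so these high traces are genuine sums of i.i.d. bounded variables whose Gaussian behaviour is classical, while the moderate range $N/2<m<N$ is handled through the exact correlation functions of the CUE.
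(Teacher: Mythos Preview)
Your Fourier decomposition, mean computation, and variance calculation agree with the paper's. The divergence is in how the limit law is obtained.

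The paper does \emph{not} condition on $\{\tau_i\}$ nor invoke a CLT for linear combinations of CUE traces. It attacks the centered moments directly: for $l\ge 2$,
\[
\E\bigl(B_N^c(f)\bigr)^l=(2\pi)^{-l/2}N^{-l}\sum_{k_1,\dots,k_l\neq 0}\ \prod_{i=1}^l\hat f\of{\tfrac{k_i}{N}}\cdot\E\prod_{i=1}^l T_N^{(k_i)}\cdot\E\prod_{j=1}^l \mathcal{T}_N^{(-k_j)},
\]
and each joint moment is expanded into joint cumulants. The key input is Lemma~5.2 of \cite{pairs}: for the CUE the $n$-th joint cumulant $\kappa_n^{(N)}(k_1,\dots,k_n)$ is $O(N)$ \emph{uniformly} in $k_1,\dots,k_n$, with $\kappa_2^{(N)}(k_1,k_2)=\min(N,|k_1|)\1_{k_1+k_2=0}$. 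Straight power counting then yields $\E(B_N^c)^{2m+1}=O(N^m)$ and $\E(B_N^c)^{2m}=(2m-1)!!\,\sigma^{2m}N^m(1+o(1))$, the leading term coming from pure pairings $(k,-k)$; moment convergence gives the CLT. Cases (ii) and (iii) use the same scheme with the appropriate cumulants for the $\tau$-ensemble; in (iii) only $k=lN$ survive and the paper explicitly notes that the $T_N^{(lN)}$ are identically distributed but dependent, which the cumulant bound still controls.

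Your conditioning route is plausible but the obstacle you flag is genuine and not fully resolved by your sketch. Conditionally on $\tau$ the statistic is $\sum_m a_m(\tau)\,T_N^{(m)}$ with weights supported all the way to $|m|\asymp N$ and beyond; there is no ready-made CLT for CUE linear statistics in that regime, and Theorem~2.5 of \cite{pairs} does not supply one (it is itself proved by the cumulant method above, not by an underlying linear-statistics CLT). You would also need the conditional CLT to hold with enough uniformity in $\tau$ to pass dominated convergence through $\E_\tau$, and your concentration step for the conditional variance implicitly requires covariance estimates for $|\tilde T_N^{(m)}|^2$ across different $m$. All of this can indeed be extracted from the determinantal structure at $\beta=2$, but the cleanest way to encode it is precisely the uniform $O(N)$ cumulant bound the paper uses; once that is in hand, the direct moment computation is shorter than the conditioning detour.
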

\begin{proof}[\hyperlink{fourtwo}{Proof of Theorem 4.2}] 

As always 
$T_N^{(k)}:=\sum_{j=1}^Ne^{ik\theta_j}$  and denote
$\mathcal{T}_N^{(k)}= \sum_{j=1}^Ne^{ik\tau_j},$ where
$\ \ k\in \Z.$ Then
\begin{align}
B_N^c(f):=B_N(f)-\E B_N(f)= \sum_{k\neq 0} \frac{1}{\sqrt{2\pi}\*N}\hat{f}\of{\frac{k}{N}}\*T_N^{(k)} \* \mathcal{T}_N^{(-k)}. 
    \end{align}
We consider the case (i) first. Using independence and $\E |T_N^{(k)}|^2= \min(|k|, N),$ has

\begin{align*}
\var(B_N(f))=  \sum_{k\neq 0} \frac{1}{2\*\pi\*N^2} \left|\hat{f}\of{\frac{k}{N}}\right|^2 \* \min(|k|, N)^2 =
\frac{N}{2\*\pi}\* \int_{\mathbb{R}} |\hat{f}(t)|^2 \*\min(|t|,1)^2\* dt \*(1+o(1)).
\end{align*}

To study higher moments, we use cumulant bounds and power counting. One writes for $l\geq 2$
\begin{align}
\label{centralmoments}
\E (B_N^c(f))^l=(2\*\pi)^{-l/2}\*N^{-l}\*\sum_{k_1,\ldots ,k_l\neq 0} \prod_{i=1}^l \hat{f}\of{\frac{k_i}{N}}
\*\E \prod_{i=1}^l T_N^{(k_i)}\*\E \prod_{j=1}^l \mathcal{T}_N^{(-k_j)}.
\end{align}
To evaluate the moments we use Lemma 5.2 from \cite{pairs} that allows one to estimate joint cumulants of
the traces of powers .
It was shown that for any $n\geq 1, \ \kappa_n^{(N)}(k_1, \ldots, k_n),$ the $n$-th joint cumulant of $T_N^{(k)}$'s is $O(N),$ uniformly in $k_1, \ldots k_n.$ In addition,
$\kappa_2^{(N)}(k_1, k_2)=\min(N, |k_1|)\*1_{k_1+k_2=0}.$ This implies that for odd values of $l=2m+1$

\begin{align*}
\E (B_N^c(f))^{2m+1}=O(N^m),    
\end{align*}

and for even values $l=2m$ main contribution to (\ref{centralmoments}) comes from the $l$-tuples 
$(k_1, \ldots, k_m)$ that could be split into pairs $(k,-k).$  By power counting one then obtains

\begin{align}
\E (B_N^c(f))^{2m}=\sigma^{2m}\*(2m-1)!!\*N^m\*(1+o(1)),    
\end{align}

and the moment convergence implies CLT. The considerations in the case (ii) are very similar. In particular,

\begin{align*}
\var(B_N(f))=  \sum_{k\neq 0} \frac{1}{2\*\pi\*N^2} \left|\hat{f}\of{\frac{k}{N}}\right|^2 \* \min(|k|, N)\*N =
\frac{N}{2\*\pi}\* \int_{\mathbb{R}} |\hat{f}(t)|^2 \*\min(|t|,1)\* dt \*(1+o(1)).
\end{align*}

We leave higher order estimates to the reader. Finally, we turn our attention to the case (iii). In this case we have:

\begin{align}
B_N^c(f)= \sum_{l\neq 0} \frac{1}{\sqrt{2\pi}}\hat{f}(l)\*T_N^{(l\*N)}. 
    \end{align}
This readily implies

\begin{align}
\var(B_N(f))=  N\* \frac{1}{2\*\pi}\*\sum_{l\neq 0}|\hat{f}(l)|^2.
\end{align}
The Central Limit Theorem again follows from the cumulant bounds and power counting. It should be noted that 
random variables  $T_N^{(l\*N)}, \ \ l\in \Z\setminus\{0\}, $ are not independent but are identically distributed - they have the same distribution as $\sum_{j=1}^N e^{i\*\tau_j}.$

\end{proof}
\section{Appendix}

In this appendix we discuss the details in Remark 1.1 of \cite{jl} that justify the statement of \hyperlink{jl}{Proposition 2.1} for arbitrary $\beta>0$. In Theorem 1.5 of \cite{jl} Johnasson and Lambert  provide the bound:
\begin{equation}
    \mathcal{W}_2\of{X,G}\leq \mathcal{O}\of{\frac{d^{3/2}}{N}},
\end{equation}

where
\begin{equation}
    X=\sqrt{\frac{\beta}{2 k}} T_d\quad \quad G= \sqrt{\frac{\beta}{2 k}} G_d.
\end{equation}
 Now define the map $X:\T^n\rightarrow \R^{2d}$ by $X_{2k-1}=\Re T_N^{(k)}$ and  $X_{2k}=\Re T_N^{(k)}, \ \ 1\leq k \leq d,$ and set $\mathbf{\Gamma}$ to be a $2d$-dimensional square matrix with the entries $\mathbf{\Gamma}_{k,l}=\nabla X_k \cdot \nabla X_l$
 
 We also define
\[\begin{aligned}
\mathbf{K}  & = N \cdot \mathrm{diag}(1,1,2,2,\cdots, d,d) \\
\xi & = \big( \Re \zeta_1  ,  \Im \zeta_1   ,  \Re \zeta_2 , \Im \zeta_2 , \cdots ,  \Re \zeta_d  ,  \Im \zeta_d \big),
\end{aligned}\]
where for $k\ge 1$ 
\vspace*{-.3cm}
\[
\zeta_k =   \sqrt{\frac k2} \sum_{\ell = 1}^{k-1} \sqrt{\ell(k-\ell)}  \mathrm{T}_N^{(\ell)} \mathrm{T}_N^{(k-\ell)} . 
\]

We refer to Section 7 of \cite{jl} (specifically Lemmas 7.2 and 7.3) for full details of the following lemma.

\begin{lemma}
For all $N,d\in\N$ and for any positive definite diagonal matrix $\mathbf{K}$ of size $2d \times 2d$, we have
\begin{equation} \label{LLW}
\mathcal{W}_2(X,G) 
\le \sqrt{\E_N \big[ |  \mathbf{K}^{-1} \xi |^2\big]}
+ \sqrt{  \E_N\big[  \| \mathbf{I} -   \mathbf{K}^{-1} \boldsymbol{\Gamma} \|^2 \big] } , 
\end{equation}
where $\| \cdot\|_{HS}$ denotes the Hilbert--Schmidt norm. 
\end{lemma}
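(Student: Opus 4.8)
The plan is to read inequality \eqref{LLW} as a multivariate Stein bound obtained through the $\Gamma$-calculus (carré-du-champ) of the diffusion that leaves the $C\beta E$ invariant, so that $\mathbf{\Gamma}$ plays the role of a random Stein kernel for the coordinate vector $X$ and $\xi$ measures the failure of the exact Stein identity. First I would realize $p_{\beta,N}$ as the reversible measure of the Dyson-type diffusion on $\T^N$ with generator $\mathcal{L}=\sum_j\partial_{\theta_j}^2+\frac{\beta}{2}\sum_j\sum_{l\neq j}\cot\!\big(\tfrac{\theta_j-\theta_l}{2}\big)\partial_{\theta_j}$, whose drift is $\nabla\log p_{\beta,N}$ and whose carré du champ is $\Gamma(F,G)=\nabla F\cdot\nabla G$. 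With this choice $\mathbf{\Gamma}_{k,l}=\nabla X_k\cdot\nabla X_l$ is exactly the carré-du-champ matrix of $X$, and reversibility yields the integration-by-parts identity $\E_N[\nabla X_k\cdot\nabla\phi(X)]=-\E_N[\phi(X)\,\mathcal{L}X_k]$ for smooth $\phi$.

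The second step is the algebraic heart: compute $\mathcal{L}X_k$ for $X_k\in\{\Re T_N^{(k)},\Im T_N^{(k)}\}$. The Laplacian part contributes the linear (eigenfunction) term, while the log-gas drift, after inserting $\cot(\tfrac{\theta_j-\theta_l}{2})=i\,(e^{i\theta_j}+e^{i\theta_l})/(e^{i\theta_j}-e^{i\theta_l})$ and resumming, produces a further linear piece together with a genuinely quadratic remainder in the products $T_N^{(\ell)}T_N^{(k-\ell)}$, $1\le\ell\le k-1$. Collecting that remainder is precisely what assembles the $\zeta_k$ and hence $\xi$. The output is an approximate Stein identity: after normalizing by the diagonal matrix $\mathbf{K}$ of expected carré-du-champ values, $\E_N[\langle X,\nabla\phi\rangle]=\E_N[\langle \mathbf{K}^{-1}\mathbf{\Gamma},\nabla^2\phi\rangle]+\E_N[\langle \mathbf{K}^{-1}\xi,\nabla\phi\rangle]$, in which $\mathbf{K}^{-1}\mathbf{\Gamma}$ is the candidate Stein kernel and $\mathbf{K}^{-1}\xi$ the defect.

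Third, I would feed this identity into the $\mathcal{W}_2$ version of Stein's method for the Gaussian target $G$. Writing $\psi$ for the solution of the associated Ornstein--Uhlenbeck Stein equation and using the standard second-order regularity estimates on $\psi$, the quantity $\E_N[h(X)]-\E[h(G)]=\E_N[\mathcal{A}\psi(X)]$ splits, through the identity above, into a Hessian term governed by $\mathbf{I}-\mathbf{K}^{-1}\mathbf{\Gamma}$ and a gradient term governed by $\mathbf{K}^{-1}\xi$. Two applications of Cauchy--Schwarz (in $\E_N$ and in the Euclidean/Hilbert--Schmidt pairings) then produce exactly the two summands $\sqrt{\E_N[\,|\mathbf{K}^{-1}\xi|^2\,]}$ and $\sqrt{\E_N[\,\|\mathbf{I}-\mathbf{K}^{-1}\mathbf{\Gamma}\|^2\,]}$; taking the supremum over admissible $h$, equivalently passing through the HSI/Talagrand transport inequality that upgrades the Stein discrepancy to a transport distance, yields \eqref{LLW}.

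The main obstacle is the passage to $\mathcal{W}_2$ rather than $\mathcal{W}_1$: one cannot work with $1$-Lipschitz test functions and first-order bounds as in the duality \eqref{wass1}, so the argument must control the Hessian of the Stein solution uniformly and keep the $\mathbf{K}^{-1}$ weights inside the norms, which forces the transport-inequality route (or an equivalent semigroup interpolation). A secondary but delicate point is the resummation of the cotangent drift: checking that the quadratic remainder assembles exactly into the stated $\zeta_k$, and that the spurious linear terms cancel against the Laplacian contribution to reproduce the correct diagonal $\mathbf{K}$, is the bookkeeping carried out in Lemmas 7.2 and 7.3 of \cite{jl}.
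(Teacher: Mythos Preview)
The paper does not supply its own proof of this lemma: immediately before the statement it writes ``We refer to Section 7 of \cite{jl} (specifically Lemmas 7.2 and 7.3) for full details,'' and nothing further is argued. Your outline is a faithful high-level reconstruction of exactly that cited argument---the circular Dyson diffusion as the reversible dynamics, $\mathbf{\Gamma}$ as the carr\'e-du-champ matrix of the coordinate map $X$, the computation of $\mathcal{L}X_k$ that isolates the quadratic defect assembling into $\xi$, and the Ledoux--Nourdin--Peccati Stein-kernel route to the $\mathcal{W}_2$ bound---so your proposal and the paper's (deferred) proof coincide.
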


We arrive at the desired bound with the following lemma. This corresponds to lemmas 7.4 and 7.5 of \cite{jl} where we used the moment estimates of Jiang-Matsumoto instead. \\

\begin{lemma}
With $\mathbf{K}, \mathbf{\Gamma}$ and $\xi$ as above we have:
\begin{equation}
   \quad \quad \quad \E_n |  \mathbf{K}^{-1} \xi|^2=\mathcal{O}\of{\frac{d^3}{N^2}}   \quad \quad \quad \E_n[||\mathbf{I-K^{-1}\Gamma}||_{HS}^2]=\mathcal{O}\of{\frac{d^3}{N^2}}  .
\end{equation}

\end{lemma}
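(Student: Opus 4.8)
The plan is to feed the two estimates into the abstract bound \eqref{LLW} of the previous lemma, producing $\mathcal{W}_2(X,G)=\mathcal{O}(d^{3/2}/N)$; combined with the scaling $X=\sqrt{\beta/(2k)}\,T_d$ this recovers \hyperlink{jl}{Proposition 2.1} for every $\beta>0$ (undoing the rescaling costs a factor $\mathcal{O}(d^{1/2})$, giving the $\mathcal{O}(d^2/N)$ of \eqref{johlam}). I would follow the argument of Lemmas 7.4--7.5 of \cite{jl} essentially verbatim, the one substantive change being that wherever a moment of a trace $T_N^{(k)}$ is estimated I invoke the $\beta$-uniform bound of \hyperlink{jm}{Proposition 2.3} (and its joint-frequency analogues in \cite{jm}) in place of the $\beta=2$ determinantal identities. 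Because $\mathbf{K}=N\,\mathrm{diag}(1,1,\dots,d,d)$ is diagonal, $\mathbf{K}^{-1}$ merely divides the $k$-th pair of coordinates by $Nk$, which reduces both quantities to weighted sums over $k$ of second and fourth moments of traces.

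For the first estimate, diagonality gives $\E_N|\mathbf{K}^{-1}\xi|^2=\tfrac{1}{N^2}\sum_{k=1}^d k^{-2}\,\E_N|\zeta_k|^2$. Expanding the definition of $\zeta_k$ I would write
\[
\E_N|\zeta_k|^2=\frac{k}{2}\sum_{\ell,\ell'=1}^{k-1}\sqrt{\ell(k-\ell)\,\ell'(k-\ell')}\;\E_N\big[T_N^{(\ell)}T_N^{(k-\ell)}\,\overline{T_N^{(\ell')}T_N^{(k-\ell')}}\big].
\]
The leading contribution comes from the index-matched terms $\ell'\in\{\ell,k-\ell\}$, where the joint moment factorises (to leading order) into a product of variances; summing these gives $\E_N|\zeta_k|^2=\mathcal{O}(k^4)$, hence $\tfrac{1}{N^2}\sum_{k\le d}k^{-2}\,\mathcal{O}(k^4)=\mathcal{O}(d^3/N^2)$. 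The remaining unpaired terms must be shown to be of strictly smaller order, using that joint moments of traces at distinct frequencies agree with their Gaussian (pairing) value up to a polynomially small error controlled uniformly in $\beta$ by the estimates of \cite{jm}.

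For the second estimate I would first make $\boldsymbol{\Gamma}$ explicit: writing $X_{2k-1}=\Re T_N^{(k)}$, $X_{2k}=\Im T_N^{(k)}$ and differentiating on $\T^N$, the product-to-sum identities express each entry $\boldsymbol{\Gamma}_{k,l}=\nabla X_k\cdot\nabla X_l$ as a $\sqrt{kl}$-weighted combination of the real and imaginary parts of $T_N^{(k\pm l)}$. In particular the diagonal entries have expectation $\mathbf{K}_{k,k}$ arising from the deterministic term $T_N^{(0)}=N$, so that $\mathbf{I}-\mathbf{K}^{-1}\boldsymbol{\Gamma}=-\mathbf{K}^{-1}\big(\boldsymbol{\Gamma}-\E_N\boldsymbol{\Gamma}\big)$ is centred. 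Consequently
\[
\E_N\big[\|\mathbf{I}-\mathbf{K}^{-1}\boldsymbol{\Gamma}\|_{HS}^2\big]=\sum_{k,l}\frac{\var\big(\boldsymbol{\Gamma}_{k,l}\big)}{(Nk)^2},\qquad \var\big(\boldsymbol{\Gamma}_{k,l}\big)=\mathcal{O}\big(kl\,(|k-l|+|k+l|)\big),
\]
the variance bound following from the second-moment case of \hyperlink{jm}{Proposition 2.3} together with the fact that traces at distinct frequencies are uncorrelated. A direct summation, in which the off-diagonal entries dominate, then produces the claimed $\mathcal{O}(d^3/N^2)$.

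The main obstacle is the fourth-moment estimate in the first step. A naive Cauchy--Schwarz or Hölder bound on $\E_N[T_N^{(\ell)}T_N^{(k-\ell)}\overline{T_N^{(\ell')}T_N^{(k-\ell')}}]$ discards the cancellation among the unpaired indices and overshoots the target by a factor of order $d$, so one genuinely needs the near-independence of distinct-frequency traces with a quantitative, $\beta$-independent error. A secondary but essential point is to keep the prefactor $\big(1+|\tfrac{2}{\beta}-1|\,(N-K+\tfrac{2}{\beta})^{-1}\,\1(\beta>2)\big)^{K}$ in \eqref{jiang} bounded: the relevant moments have $K=km\le 2d$, so the regime $d=o(N)$ (in particular $2d\le N/2$) renders this prefactor $1+o(1)$ uniformly in $\beta$. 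This is precisely the constraint that, propagated back through the Wasserstein bounds, underlies the hypothesis $L_N=\mathcal{O}(N^{2/3-\epsilon})$ of Theorems 1.1 and 1.2.
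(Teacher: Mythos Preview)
Your proposal follows essentially the same route as the paper. Both arguments take the explicit expressions for $|\mathbf{K}^{-1}\xi|^2$ and $\|\mathbf{I}-\mathbf{K}^{-1}\boldsymbol{\Gamma}\|_{HS}^2$ worked out in Section~7 of \cite{jl} and replace the $\beta=2$ moment identities used there by the Jiang--Matsumoto bounds of \hyperlink{jm}{Proposition~2.3}. The paper is shorter only because it quotes the final formulas from pages 36--37 of \cite{jl} rather than rederiving $\boldsymbol{\Gamma}$ from $\nabla X_k\cdot\nabla X_l$; and for the fourth-moment step it invokes Theorem~1(b) of \cite{jm} directly, which already yields
\[
\E\big[T_N^{(\ell)}T_N^{(k-\ell)}\overline{T_N^{(\ell')}T_N^{(k-\ell')}}\big]\le C_\beta\Big(\ell(k-\ell)\,\1_{\ell'\in\{\ell,\,k-\ell\}}\Big),
\]
so the ``near-independence of distinct-frequency traces'' that you correctly isolate as the crux is dispatched by citation rather than by a separate quantitative error analysis. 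Your remark on keeping the Jiang--Matsumoto prefactor bounded via $K=km\le 2d=o(N)$ is exactly the mechanism the paper relies on implicitly.
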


\begin{proof} From  page 37 of \cite{jl} we have the identity:

\begin{equation*}
    ||\mathbf{I-K^{-1}\Gamma}||_{HS}^2=\frac{\beta}{N^2}\sum_{1\leq k<l\leq d}|T_{N}^{(l-k)}|^2+|T_{N}^{(l+k)}|^2\ \ +\frac{5\beta}{2N^2}\sum_{k=1}^d|\Re(T_{N}^{(k)})|^2\end{equation*}

   Since  $k\leq L_N$, \hyperlink{jm}{Proposition 2.3 } gives that $\E |T_N^{(k)}|^2\leq C_{\beta}k$ and thus

    \begin{equation*} 
\E_n\big[  \| \mathbf{I} -  \mathbf{K}^{-1} \boldsymbol{\Gamma} \|_{HS}^2 \big]   
\leq C_{\beta}\times   \left(\sum_{1\le k < \ell \le d} \frac{\ell}{N^2} + \sum_{k=1}^d  \frac{k}{N^2} \
\right)=\mathcal{O}\of{\frac{d^3}{N^2}} .
\end{equation*}
    
From page 36 of \cite{jl}  we have the identity:
 \begin{equation}
|  \mathbf{K}^{-1} \xi|^2  = \sum_{1\le \ell , \ell' < k \le d}     \frac{\beta^2}{8k N^2}  \mathrm{T}_N^{(\ell)} \mathrm{T}_N^{(k-\ell)}  \overline{\mathrm{T}_N^{(\ell')} \mathrm{T}_N^{(k-\ell')}} . 
\end{equation}
   Since  $k\leq L_N$, \hyperlink{jm}{Proposition 2.3} gives us (see Theorem 1 part (b) of \cite{jm}): 
\[ \begin{aligned}
\E_n\big[  \mathrm{T}_N^{(\ell)} \mathrm{T}_N^{(k-\ell)}  \overline{\mathrm{T}_N^{(\ell')} \mathrm{T}_N^{(k-\ell')}} \big] \
\leq C_{\beta} \left( \ell(k-\ell)\1_{\{\ell = \ell',\ell\neq k/2\}} + \ell(k-\ell)\1_{\{\ell = k- \ell',\ell\neq k/2\}} )  +  \frac{k^2}{4}\1_{\ell = \ell' = k/2}  \right).
\end{aligned}\]

Taking expectations we have:
\[  
\E_N \big[ |  \mathbf{K}^{-1} \xi|^2 \big]  \leq  \frac{K_{\beta}}{N^2} \sum_{1\le \ell  < k \le d} \hspace{-.3cm}   \frac{\ell(k-\ell)}{ k} 
= \mathcal{O}\of{\frac{d^3}{N^2}}.\]

\end{proof}

\end{document}